\crefname{equation}{}{}
\crefname{enumi}{}{}
\crefname{figure}{Figure}{Figure}
\crefname{table}{Table}{Table}
\crefname{subsection}{Subsection}{Subsections}
\crefname{lemma}{Lemma}{Lemma}
\crefname{theorem}{Theorem}{Theorem}
\crefname{proposition}{Proposition}{Proposition}
\crefname{section}{Section}{Section}
\crefname{appendix}{Appendix}{Appendix}
\newtheorem{theorem}{Theorem}[section]
\newtheorem{lemma}[theorem]{Lemma}
\theoremstyle{definition}
\theoremstyle{remark}
\newtheorem{remark}[theorem]{Remark}
\numberwithin{equation}{section}
\numberwithin{figure}{section}
\numberwithin{table}{section}
\newtheorem{conjecture}[theorem]{Conjecture}
\newcommand{\oname}[1]{\mathrm{#1}}
\newcommand{\abs}[1]{\left|#1\right|}
\newcommand{\eqdef}{\stackrel{\mathrm{def}}{=\joinrel=}}
\newcommand{\phf}[1]{{{#1}+1/2}}
\newcommand{\phfg}[2]{{{#1}+{#2}/2}}
\newcommand{\mhf}[1]{{{#1}-1/2}}
\newcommand{\mhfg}[2]{{{#1}-{#2}/2}}
\newcommand{\floor}[1]{\left\lfloor{#1}\right\rfloor}
\newcommand{\fdm}{{\textrm{fdm}}}
\newcommand{\cm}{\textrm{\ding{51}}}
\newcommand{\xm}{\textrm{\ding{55}}}
\newcommand{\hc}{\mathrlap{\cm}{~{\large\backprime}}}
\newcommand{\oone}{\varepsilon}
\begin{document}

\title[Stability Barrier of Hermite Type Methods]{On the Stability Barrier of Hermite Type Discretizations of Advection Equations}

\author[X.~Zeng]{Xianyi Zeng}
\address{Department of Mathematics, Lehigh University, Bethlehem, PA 18015, United States.}
\email[Corresponding author, X.~Zeng]{xyzeng@lehigh.edu}

\date{\today}

\subjclass[2020]{65M06 \and 65M08 \and 65M12}

\keywords{
  Linear advection equations;
  Hermite-type methods;
  Hybrid-variable discretization;
  Stability barrier;
  Combinatoric equalities;
  Hermite WENO methods.
}

\begin{abstract}
  In this paper we establish a stability barrier of a class of high-order Hermite-type discretization of 1D advection equations underlying the hybrid-variable (HV) and active flux (AF) methods.
  These methods seek numerical approximations to both cell-averages and nodal solutions and evolves them in time simultaneously.
  It was shown in earlier work that the HV methods are supraconvergent, providing that the discretization uses more unknowns in the upwind direction than the downwind one, similar to the ``upwind condition'' of classical finite-difference schemes.
  Although it is well known that the stencil of finite-difference methods could not be too biased towards the upwind direction for stability consideration, known as ``stability barrier'', such a barrier has not been established for Hermite-type methods.
  In this work, we first show by numerical evidence that a similar barrier exists for HV methods and make a conjecture on the sharp bound on the stencil.
  Next, we prove the existence of stability barrier by showing that the semi-discretized HV methods are unstable given a stencil sufficiently biased towards the upwind direction.
  Tighter barriers are then proved using combinatorical tools, and finally we extend the analysis to studying other Hermite-type methods built on approximating nodal solutions and derivatives, such as those widely used in Hermite WENO methods.
\end{abstract}

\maketitle

\section{Introduction}
\label{sec:intro}
In this work we establish stability barriers of Hermite-type discretizations of linear advection equations, a prototype problem for general hyperbolic conservation laws.
Hyperbolic conservation laws are underlying many systems such as those related to fluid mechanics, cellular dynamics in tumor growth models, and population migrations.
While most practical methods focus on second-order accuracy, largely due to the existence of shock waves, recent interests in multi-scale problems make higher-order methods attractive as they tend to resolve small structures better in regions away from discontinuities.

In general, higher-order methods either demand a larger stencil to update the solution at a local node or cell, which makes parallel computation more difficult, or they insert more degrees of freedom inside one mesh cell, which results in more unknowns to achieve the same order of accuracy.
Examples of the first kind include the high-order finite difference methods~\cite{AIserles:1984b,GAGerolymos:2009a,JBerg:2012a} and k-exact finite volume schemes~\cite{TJBarth:1993a,TBarth:2004a,CMichalak:2009a}, whereas the latter is exemplified by discontinuous Galerkin methods~\cite{BCockburn:1989a,WCao:2014a}.
There is a vast volume of literature in any of these topics and our lists are by no means complete.
Recently there have been growing interests in methods utilizing multiple moments (such as cell-average and nodal values, or nodal values and derivatives, or all of them), which seek a balance between using a too-large stencil and too many unknowns in each geometric cell.
This work focuses on methods that use two moments, particularly the active flux methods~\cite{WBarsukow:2021a,RAbgrall:2023a,JDuan:2025a} and the hybrid-variable methods developed by the author~\cite{XZeng:2014a,XZeng:2019a,MMHasan:2023a,XZeng:2024a}, which find numerical approximations to both cell-averages and nodal values; methods using nodal values and derivatives like the Hermite WENO methods~\cite{GCapdeville:2008a,HLiu:2015a} and multi-moment methods~\cite{MRNorman:2012a,XLi:2020a} will also be discussed after presenting the main result.
It should be noted that methods involving more moments have been proposed in the literature, such as~\cite{JGoodrich:2006a,DAppelo:2012a,XChen:2014a}; we shall leave the analysis of these methods in future work.

To handle discontinuities, these methods combine a high-order discretization of the equation and a non-linear adaptive dection of discontinuities and stability enhancements near these locations.
For example, HV methods utilize a residual-consistent artificial viscosity in compressible flow computations~\cite{XZeng:2024a}, the AF methods use the classical idea of flux limiting and special local interpolation to reduce undershoots or overshoots~\cite{WBarsukow:2021a,JDuan:2025a}, and both Hermite WENO methods and multi-moment schemes cited before adopt WENO-type limiters to build a high-order accurate flux as a convex combination of lower-order ones.
In this work we avoid the discussion of nonlinear stability enhancements and concentrate solely on the underlying high-order discretization that is typically linear for linear problems.
The rationale is that almost all aforementioned methods reduce to their underlying high-order ones when the solution is sufficiently smooth; thus the linear stability of this high-order scheme is necessary for successful application of the method to solve more complicated problems.
As the underlying linear schemes are identical for active flux and hybrid-variable methods, we shall refer to it as the HV discretizations for simplicity, as most of the notations are adopted from the author's previous work~\cite{XZeng:2019a}.

To this end, the main contribution of this paper is shedding lights on the condition under which an optimal HV discretization gives rise to a linearly stable scheme; and our analysis is conducted in the context of method of lines as one can always pair a stable semi-discretized method with appropriate time-integrator (explicit or implicit) to construct a lienarly stable fully-discretized scheme.
Note that a similar question has been studied long ago to the single-moment finite-difference methods (FDM), which states that FDMs with optimal accuracy for an advection equation is linearly stable if and only if $r\le l\le r+2$, where $l$ and $r$ are number of points in the upwind and downwind directions, respectively~\cite{AIserles:1982a,AIserles:1983a,BDespres:2009a}.
Note that $l=r$ corresponds to a central scheme in which case the $L^2$-norm of the solution is preserved, and it is sometimes referred to as neutrally stable.
It is usually adopted that $r+1\le l\le r+2$ in practice.
This inequality is known as the {\it stability barrier} of FDM to solve linear advection equations; and our purpose is to show a similar barrier exists to HV discretizations.

Extension of existing analysis of stability barrier from FDMs to HVs is highly challenging.
In a series of work of Iserles and coworkers, the theory of order stars is developed~\cite{AIserles:1982a,AIserles:1985a,AIserles:1985b,AIserles:1985c,AIserles:1991a} and it provides a geometrical connection between the accuracy and the stability of a finite-difference discretization.
Later, Despr\'{e}s~\cite{BDespres:2008a,BDespres:2009a} proved the same stability barrier using elementary techniques and studying the integral form of the local truncation error.
However, neither technique seems to extend to Hermite-type methods as discussed briefly in~\cref{app:rev}.

To this end, our strategy is to center around the classical Fourier analysis (or von Neumann stability analysis) and use heavily combinatoric tools to study the characteristic trigonometric function that determines the stability of the methods.
In particular, after reviewing the coefficients and charateristic functions of the HV discretizations in~\cref{sec:prelim}, our main results on stability barriers are presented in~\cref{sec:bar}.
The results in this section include two parts:~\cref{thm:bar_sqrt} proves the existence of the stability barrier, although it does not provide a tight bound on where the barrier is, and~\cref{thm:bar_pi} provides instability results when the stencil is only moderately upwind-biased, which is of more practical importance.
Some of the necessary combinatoric tools involved in the proofs of this section are provided in the appendices.
Next, we show that the techniques established in~\cref{sec:bar} can be utilized to study the stability barrier of other Hermite-type methods, particularly the ones using nodal values and nodal derivatives, see~\cref{sec:hermite}.
Lastly,~\cref{sec:concl} concludes the current paper and discuss future directions.




\section{Preliminaries}
\label{sec:prelim}
Let us consider the Cauchy problem for the advection equation:
\begin{equation}\label{eq:prelim_adv}
  u_t + u_x = 0\;, 
\end{equation}
on the domain $[0,\,1]$ with periodic boundary condition $u(0,t)=u(1,t)$; and we seek numerical approximations with respect to a uniform grid $x_j=jh$, where $h=1/N>0$ is the grid size.
The discretization of~\cref{eq:prelim_adv} in space by the hybrid-variable (HV) method concerns approximations to both the nodal values $u_j(t)\approx u_j^\ast(t) \eqdef u(x_j,t)$ and cell averages $\overline{u}_{\phf{j}}(t)\approx \overline{u}^\ast_{\phf{j}} \eqdef \frac{1}{h}\int_{x_j}^{x_{j+1}}u(x,t)dx$.
It is understood that $u_j=u_{j+N}$ and $\overline{u}_{\phf{j}}=\overline{u}_{\phf{j+N}}$ for all integer $j$.
Here the asteroid denotes exact values.
An HV scheme is determined by the discrete differential operator (DDO) $[\mathcal{D}_x]$, which is defined by (suppressing the dependence on $t$):
\begin{equation}\label{eq:prelim_ddo}
  [\mathcal{D}_xu]_j \eqdef \frac{1}{h}\sum_{k=-l}^{r-1}\alpha_k\overline{u}_{\phf{j+k}}+\frac{1}{h}\sum_{k=-l'}^{r'}\beta_ku_{j+k}\;.
\end{equation}
The construction of $[\mathcal{D}_xu]_j\approx u_x(x_j)$ uses $l$ cells and $l'$ nodes to the left of $x_j$, and $r$ cells and $r'$ nodes to the right of $x_j$; the quadruple $(l,r,l',r')$ is thus called the stencil of the DDO $[\mathcal{D}_x]$.
It is natural to assume that the stencil utilizes a continuous set of entities, thus $l'\le l\le l'+1$ and $r'\le r\le r'+1$.
An equivalent way to denote such $[\mathcal{D}_x]$ is the pair of indices $(L,R)$, where $L=l+l'$ and $R=r+r'$; one can determine the four-integer index by $l'=\floor{L/2}$, $l=L-l'$, $r'=\floor{R/2}$, and $r=R-r'$.

The DDO~\cref{eq:prelim_ddo} is $p$-th order accurate if for sufficiently smooth $u$, one has:
\begin{displaymath}
  [\mathcal{D}_xu^\ast]_j = u_x(x_j) + c_p\frac{d^{p+1}u(x_j)}{dx^{p+1}}h^p + O(h^{p+1})
\end{displaymath}
for some $c_p\ne0$.
In previous work~\cite{XZeng:2019a} it is shown that given a continuous stencil $(l,r,l',r')$ there exists a unique $[\mathcal{D}_x]$ that has the optimal order $p=l+r+l'+r'=L+R$.
We shall only consider these optimally accurate DDO in the present work.
Once $[\mathcal{D}_x]$ is chosen, the semi-discretization of~\cref{eq:prelim_adv} is thus given by for all $1\le j\le N$:
\begin{equation}\label{eq:prelim_semi}
  \left\{\begin{array}{l}
    \overline{u}_{\phf{j}}' + \frac{1}{h}\left(u_{j+1}-u_j\right) = 0\;, \\ \vspace*{-.05in} \\
    u_j' + [\mathcal{D}_xu]_j = 0\;.
  \end{array}\right.
\end{equation}
By the von Neumann analysis, much of the property of the numerical method can be obtained by studying the eigenvalues of this linear system of ordinary differential equations.
Particularly, one can show that (for example, following~\cite[Section 4]{XZeng:2019a}) all eigenvalues of the ODE system~\cref{eq:prelim_semi} lie on the set of eigenvalues (denoted $\mathcal{S}$) of the $2\times2$ matrix:
\begin{equation}\label{eq:prelim_semi_mat}
  \begin{bmatrix}
    0 & e^{i\theta}-1 \\
    G(\theta) & H(\theta)
  \end{bmatrix}\;,\quad\textrm{ where }
  G(\theta) = \sum_{k=-l}^{r-1}\alpha_k e^{ik\theta}\;,\ \textrm{ and }\  
  H(\theta) = \sum_{k=-l'}^{r'}\beta_k e^{ik\theta}\;,
\end{equation}
with $\theta$ runs from $0$ to $2\pi$. 
The characteristic polynomial of this matrix is given by:
\begin{equation}\label{eq:prelim_semi_char}
  \lambda^2 - H(\theta)\lambda - (e^{i\theta}-1)G(\theta) = 0\;,
\end{equation}
and the set $\mathcal{S}$ is thus defined as:
\begin{equation}\label{eq:prelim_semi_eigset}
  \mathcal{S} = \{\lambda\in\mathbb{C}:\, \lambda^2-H(\theta)\lambda-F(\theta) = 0\ \textrm{ for some }\ 0\le\theta\le2\pi\},
\end{equation}
where $F(\theta)=(e^{i\theta}-1)G(\theta)$.
Physically, $\theta=\kappa h$ where $\kappa$ represents the wave number.

It was proved that given sufficiently smooth initial data, the semi-discretized solution converges to the actual one pointwise at $(p+1)$-th order of accuracy if: (1) the DDO $[\mathcal{D}_x]$ is $p$-th order, (2) the stencil is biased towards the upwind direction, or $L>R$, and (3) the ODE system is stable (to be made precise later).
Here the first condition is known as {\it supraconvergence}, as the order of the method is higher than that of the local truncation error.
The upwind-biased condition, despite its similarity to that of finite-difference method, is not due to stability considerations but a necessary condition for one of the two roots to~\cref{eq:prelim_semi_char} decays exponentially fast to zero as $h\to0$ or $\theta\to0$.
This paper focuses on the third condition and investigates the stability given an upwind biased stencil ($L>R$).

When saying the linear ODE system~\cref{eq:prelim_semi} is stable, we mean all its eigenvalues have negative real parts except at most a simple one at zero.
Note that these eigenvalues are precisely $-\lambda(\theta)$, where $\lambda$ is a root of~\cref{eq:prelim_semi_char}; and it is easy to compute when $\theta=0$, the two roots of~\cref{eq:prelim_semi_char} are $0$ and $H(0)=\sum_{k=-l'}^{r'}\beta_k>0$\footnote{The two conditions $H(0)>0$ and $L>R$ are equivalent, see~\cite[Proposition 5]{XZeng:2019a}, as well as the discussion in the later part of this paper}, it is not difficult to see that stability for all admissible combinations of wavenumber, domain size, and number of cells is equivalent to say $\mathcal{S}'$ is contained in the open right complex plane $\mathbb{C}^+=\{z:\,\oname{Re}z>0\}$, where:
\begin{equation}\label{eq:prelim_semi_traj}
  \mathcal{S}' = \{\lambda\in\mathbb{C}:\, \lambda^2-H(\theta)\lambda-F(\theta) = 0\ \textrm{ for some }\ 0<\theta<2\pi\}.
\end{equation}

Our analysis is based on the following sufficient and necessary condition for $\mathcal{S}'\subseteq\mathbb{C}^+$.
\begin{lemma}\label{lm:prelim_stab}
  The set $\mathcal{S}'$ is contained in $\mathbb{C}^+$ if and only if for all $0<\theta<2\pi$, the following two conditions are satisfied:
  \begin{subequations}\label{eq:prelim_stab_cond}
    \begin{align}
      \label{eq:prelim_stab_cond_h}
      \oname{Re}H > 0\;, \\
      \label{eq:prelim_stab_cond_g}
      \oname{Re}H\,\oname{Re}(\overline{H}F)+(\oname{Im}F)^2 < 0\;.
    \end{align}
  \end{subequations}
  Here $H=H(\theta)$ and $F=F(\theta)$.
\end{lemma}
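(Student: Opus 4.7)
The plan is to parametrize the two roots of the quadratic $\lambda^2-H(\theta)\lambda-F(\theta)=0$ as $\lambda_\pm=x_\pm+iy_\pm$ and reduce the condition $\mathcal{S}'\subseteq\mathbb{C}^+$ to sign conditions on $x_\pm$. Fixing $\theta\in(0,2\pi)$ and writing $H=H(\theta)$, $F=F(\theta)$, the membership $\lambda_\pm\in\mathbb{C}^+$ is equivalent to $x_++x_->0$ together with $x_+x_->0$. Vieta's formulas give $H=\lambda_++\lambda_-$ and $F=-\lambda_+\lambda_-$, so $\oname{Re}H=x_++x_-$ and condition~\cref{eq:prelim_stab_cond_h} is literally $x_++x_->0$; half of the equivalence is thus settled by inspection.

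For the other half I will establish the algebraic identity
\begin{displaymath}
\oname{Re}H\cdot\oname{Re}(\overline{H}F)+(\oname{Im}F)^2 = -x_+x_-\bigl[(x_++x_-)^2+(y_+-y_-)^2\bigr].
\end{displaymath}
This follows by starting from $\overline{H}F=-(\bar\lambda_++\bar\lambda_-)\lambda_+\lambda_-=-|\lambda_+|^2\lambda_--|\lambda_-|^2\lambda_+$, whence $\oname{Re}(\overline{H}F)=-(|\lambda_+|^2x_-+|\lambda_-|^2x_+)$, and from $\oname{Im}F=-(x_+y_-+x_-y_+)$, then expanding both terms as polynomials in $x_\pm,y_\pm$ and collecting. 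Once the identity is in hand, the bracket is non-negative and vanishes only when $x_++x_-=0$ and $y_+=y_-$; under~\cref{eq:prelim_stab_cond_h} the first possibility is excluded, so the bracket is strictly positive and~\cref{eq:prelim_stab_cond_g} becomes exactly $x_+x_->0$. Combined with $x_++x_->0$ this yields $x_\pm>0$, and conversely $x_\pm>0$ trivially implies both~\cref{eq:prelim_stab_cond_h,eq:prelim_stab_cond_g}. The argument applies equally to the degenerate case $\lambda_+=\lambda_-$.

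The main obstacle is the algebraic identity above. It is elementary but not transparent: both sides are quartic in $x_\pm,y_\pm$, and the cancellation of the cross terms $x_\pm^2 y_\mp^2$ between $-(x_++x_-)(|\lambda_+|^2x_-+|\lambda_-|^2x_+)$ and $(x_+y_-+x_-y_+)^2$ is the non-obvious mechanism that produces the clean factorised right-hand side. Working with $H,F$ directly (rather than via the roots) seems to obscure this cancellation, which is why passing through $\lambda_\pm$ and Vieta is essential to the proof.
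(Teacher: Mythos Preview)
Your proposal is correct and follows essentially the same approach as the paper: parametrize the two roots as $\lambda_\pm=x_\pm+iy_\pm$, use Vieta to write $H=\lambda_++\lambda_-$ and $F=-\lambda_+\lambda_-$, and verify the key algebraic identity $\oname{Re}H\,\oname{Re}(\overline{H}F)+(\oname{Im}F)^2=-x_+x_-\bigl[(x_++x_-)^2+(y_+-y_-)^2\bigr]$, from which the equivalence is immediate. The paper's proof simply states this identity and the conclusion, whereas you spell out the intermediate computation of $\oname{Re}(\overline{H}F)$ and $\oname{Im}F$ and the cancellation mechanism, but the substance is the same.
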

\begin{proof}
  Let the two roots of~\cref{eq:prelim_semi_char} be $\lambda_{1,2}=x_{1,2}+iy_{1,2}$, then $H=z_1+z_2$ and $F=-z_1z_2$; and one can compute:
  \begin{align*}
    &\oname{Re}H = x_1 + x_2\;, \\
    &\oname{Re}H\,\oname{Re}(\overline{H}F)+(\oname{Im}F)^2 = - x_1x_2[(x_1+x_2)^2+(y_1-y_2)^2]\;.
  \end{align*}
  It is easy to see that $x_1,x_2>0$ if and only if~\cref{eq:prelim_stab_cond} hold.
\end{proof}
\begin{remark}
  The seemingly simple proof does not tell how the conditions~\cref{eq:prelim_stab_cond} are derived in the first place.
  In fact, one can get them as sufficient conditions for stability using a classical result by Evelyn Frank~\cite[Theorem 2.1]{EFrank:1947a}; and we show here that they are also necessary.
\end{remark}

The question about the stability barrier concerns whether a DDO $[\mathcal{D}_x]$ with optimal order of accuracy and a stencil such that $L>R$ will be stable.
In this regard, this paper has two purposes: first we want to show that such a stability barrier exists for HV methods, that is, not all upwind HV methods are stable; and then prove some results concerning where the barrier actually is.
First of all, the condition~\cref{eq:prelim_stab_cond} in combination with explicit formula for $[\mathcal{D}_x]$ (see~\cite[Theorem 1]{XZeng:2019a} or stated here as~\cref{lm:prelim_coef} below) allow one to verify the stability (or instability) of any particular HV scheme.
\begin{lemma}\label{lm:prelim_coef}
  The unique DDO $[\mathcal{D}_x]$ with stencil $(l,r,l',r')$ and optimal order of accuracy $p=l+r+l'+r'$ is given by~\cref{eq:prelim_ddo} and:
  \begin{subequations}\label{eq:prelim_coef}
    \begin{align}
      \label{eq:prelim_coef_cell_neg}
      \alpha_\nu &= -(1-\delta_{ll'})\frac{2}{l^2}C^{l,r}_{-l}C^{l,r'}_{-l}-\sum_{k=-l'}^\nu\frac{2(1+k(\zeta_k^{l,r}+\zeta_k^{l',r'}))}{k^2}C_k^{l,r}C_k^{l',r'},\ \forall -l\le\nu<0\;, \\
      \label{eq:prelim_coef_cell_pos}
      \alpha_\nu &= \sum_{k=\nu+1}^{r'}\frac{2(1+k(\zeta_k^{l,r}+\zeta_k^{l',r'}))}{k^2}C_k^{l,r}C_k^{l',r'}+(1-\delta_{rr'})\frac{2}{r^2}C^{l,r}_{r}C^{l',r}_{r},\ \forall 0\le\nu<r\;, \\
      \label{eq:prelim_coef_node_zero}
      \beta_0 &= 2(\zeta_0^{l,r}+\zeta_0^{l',r'})\;, \\
      \label{eq:prelim_coef_node_nz}
      \beta_\nu &= -\frac{2}{\nu}C^{l,r}_{\nu}C^{l',r'}_{\nu}\;,\ \forall -l'\le\nu\le r',\ \nu\ne0\;.
    \end{align}
  \end{subequations}
  Here $\delta_{ab}$ is Kronecker delta, $C^{m,n}_k=\frac{m!n!}{(m+k)!(n-k)!}$, and $\zeta_k^{m,n} = H_{m+k}-H_{n-k}$ for all $-m\le k\le n$, where $H_k=1+\cdots+1/k$ is the $k$-th Harmonic number and it is understood that $H_0=0$.
\end{lemma}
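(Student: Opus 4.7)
The plan is to recast the order-$p$ exactness requirement as a polynomial interpolation problem and extract the coefficients via a Hermite-type basis. Uniqueness of $[\mathcal{D}_x]$ is a routine dimension count: the exactness conditions $[\mathcal{D}_x u^\ast]_j = u_x(x_j)$ applied to $u(x) = (x - x_j)^m$ for $m = 0, 1, \ldots, p$ yield $p + 1 = l + r + l' + r' + 1$ linear equations in the same number of unknowns $\{\alpha_k\} \cup \{\beta_k\}$, and the associated linear system is nonsingular because the interpolation problem stated next is unisolvent. It therefore suffices to exhibit the solution in closed form.

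Recast: let $P$ be the unique polynomial of degree $\leq p$ satisfying $\frac{1}{h}\int_{x_{j+k}}^{x_{j+k+1}} P\, dx = \overline{u}_{\phf{j+k}}$ for $-l \leq k \leq r - 1$ and $P(x_{j+k}) = u_{j+k}$ for $-l' \leq k \leq r'$, so that $[\mathcal{D}_x u]_j = P'(x_j)$ and the $\alpha_k, \beta_k$ are precisely the weights of $P'(x_j)$ as a linear functional of the data. Introducing an antiderivative $\tilde{P}$ with $\tilde{P}' = P$ turns the cell-average data into nodal values $\tilde{P}(x_{j+k})$ for $-l \leq k \leq r$ (modulo an additive constant annihilated by the subsequent differentiation) and keeps the nodal data as $\tilde{P}'(x_{j+k}) = u_{j+k}$ for $-l' \leq k \leq r'$. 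This is a (generally incomplete) Hermite interpolation problem for a polynomial $\tilde{P}$ of degree $\leq p + 1$ with $l + r + 1$ value conditions and $l' + r' + 1$ derivative conditions, and the sought quantity is $[\mathcal{D}_x u]_j = \tilde{P}''(x_j)$.

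Build the Hermite basis explicitly using the node product $\omega(x) = \prod_{s=-l}^{r}(x - x_{j+s})$ and its restriction $\omega^{\mathrm{d}}(x) = \prod_{s=-l'}^{r'}(x - x_{j+s})$ to the derivative sites; each basis polynomial is then an explicit product of linear factors with prescribed multiplicities at the appropriate nodes. Computing $\tilde{P}''(x_j)$ reduces to two-derivative evaluations of such products at $x_j$, handled by logarithmic differentiation: the identity $\omega(x_{j+k}) \propto (-1)^{r-k}(l+k)!(r-k)!$ (up to powers of $h$) yields, after normalization, the factor $C_k^{l,r}$; the analogous truncated product produces $C_k^{l',r'}$; and combining them at each node $x_{j+k}$ with $k \neq 0$ gives the joint weight $\tfrac{1}{k^2} C_k^{l,r} C_k^{l',r'}$ appearing throughout~\cref{eq:prelim_coef}. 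The harmonic terms $\zeta_k^{l,r} = H_{l+k} - H_{r-k}$ arise naturally as the first logarithmic derivative of $\omega$ at $x_{j+k}$, since $h\,\omega'(x_{j+k})/\omega(x_{j+k}) = \sum_{s\ne k} 1/(k-s) = \zeta_k^{l,r}$, and the diagonal term $\beta_0 = 2(\zeta_0^{l,r} + \zeta_0^{l',r'})$ is the corresponding quantity evaluated at $x_j$ itself, aggregating both Hermite-basis pieces.

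The main obstacle is the clean combinatorial packaging of this calculation into~\cref{eq:prelim_coef}. The telescoping sums $\sum_{k=-l'}^{\nu}$ and $\sum_{k=\nu+1}^{r'}$ in $\alpha_\nu$ emerge when expressing the cell-average coefficient as a difference of two consecutive nodal contributions to $\tilde P$; the Kronecker-delta corrections $1 - \delta_{ll'}$ and $1 - \delta_{rr'}$ in $\alpha_{-l}$ and $\alpha_{r-1}$ capture the absence of a derivative condition at the outermost nodes when $l > l'$ or $r > r'$ and require a separate end-point basis function of reduced multiplicity; and the fusion of the value- and derivative-basis contributions into a single product with the additive harmonic correction $\zeta_k^{l,r} + \zeta_k^{l',r'}$ is handled by careful indexing together with standard identities for harmonic numbers and binomial coefficients. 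These reductions are routine but tedious, and together with the logarithmic-derivative computations above complete the derivation.
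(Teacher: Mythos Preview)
Your approach is essentially the one the paper uses. The paper does not prove this lemma in full here; it is quoted from~\cite[Theorem~1]{XZeng:2019a}, and only the balanced case $l=l'$, $r=r'$ is re-derived in Section~4 via exactly the antiderivative trick you describe: writing $U(x)=\int_0^x u$, interpreting $\overline{u}_{\phf{j}}$ and $u_j$ as nodal values and nodal derivatives of $U$, building the Hermite fundamental polynomials $h_k,g_k$ from the Lagrange basis, and reading off $\Delta\alpha_k=h^2h_k''(0)$, $\beta_k=hg_k''(0)$. The paper also records the identity $hl_k'(kh)=\zeta_k^{l,r}$, which is precisely your logarithmic-derivative observation producing the harmonic differences. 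Your handling of the unbalanced endpoints ($l>l'$ or $r>r'$) via reduced-multiplicity basis functions and the resulting Kronecker-delta corrections goes beyond what the present paper spells out but matches the structure of the cited result.
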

The stability of HV methods with stencil $(L,R)$, $0\le R<L\le8$ are checked and listed in~\cref{tb:prelim_stab}.
\begin{table}\centering
  \caption{Stability of semi-discretized HV methods: $\cm$ -- stable, $\hc$ -- unstable but satisfies~\cref{eq:prelim_stab_cond_h}, $\xm$ -- unstable and violates~\cref{eq:prelim_stab_cond_h}.}
  \label{tb:prelim_stab}
  \begin{tabular}{@{}lllllllll@{}}
    \toprule[.5mm]
        & \multicolumn{7}{c}{$R$} \\ \cmidrule[.2mm]{2-9}
    $L$ & 0     & 1     & 2     & 3     & 4     & 5     & 6     & 7     \\ \cmidrule[.3mm]{1-9}
    1   & $\cm$ &       \\
    2   & $\cm$ & $\cm$ &     \\
    3   & $\cm$ & $\cm$ & $\cm$ &       \\
    4   & $\xm$ & $\hc$ & $\cm$ & $\cm$ &       \\
    5   & $\xm$ & $\xm$ & $\hc$ & $\cm$ & $\cm$ &       \\
    6   & $\xm$ & $\xm$ & $\xm$ & $\hc$ & $\cm$ & $\cm$ &       \\
    7   & $\xm$ & $\xm$ & $\xm$ & $\xm$ & $\hc$ & $\cm$ & $\cm$ &       \\
    8   & $\xm$ & $\xm$ & $\xm$ & $\xm$ & $\xm$ & $\hc$ & $\cm$ & $\cm$ \\
    \bottomrule[.4mm]
  \end{tabular}
\end{table}
For example, we may consider three $8$th order methods, constructed by $7$th order DDOs with stencils $(4,3)$, $(5,2)$, and $(7,0)$, respectively: 
\begin{itemize}
  \item The $(4,3)$-DDO is given by:
    \begin{equation}\label{eq:prelim_ddo_43}\small
      [\mathcal{D}_xu]_j = \frac{-53\overline{u}_{\mhfg{j}{3}}-725\overline{u}_{\mhf{j}}+355\overline{u}_{\phf{j}}+3\overline{u}_{\phfg{j}{3}}}{216h}+\frac{u_{j-2}+24u_{j-1}+18u_j-8u_{j+1}}{18h}\;,
    \end{equation}
    for which~\cref{eq:prelim_stab_cond} are:
    \begin{align*}
      &\oname{Re}H = \frac{1}{9}\left(\cos\theta+1\right)\left(\cos\theta+7\right)+\frac{1}{6}>0\;, \\
      &\oname{Re}H\,\oname{Re}(\overline{H}F)+(\oname{Im}F)^2 = -\frac{1}{1458}(1-\cos\theta)^5(13+\cos\theta)<0\;.
    \end{align*}
  \item The $(5,2)$-DDO is given by:
    \begin{equation}\label{eq:prelim_ddo_52}\small
      [\mathcal{D}_xu]_j = \frac{-\overline{u}_{\mhfg{j}{5}}-77\overline{u}_{\mhfg{j}{3}}-401\overline{u}_{\mhf{j}}+59\overline{u}_{\phf{j}}}{72h}+\frac{2u_{j-2}+18u_{j-1}+16u_j-u_{j+1}}{6h}\;.
    \end{equation}
    For this method:
    \begin{align*}
      &\oname{Re}H = \frac{2}{3}(\cos\theta+1)(\cos\theta+\frac{13}{4})+\frac{1}{6}>0\;, \\
      &\oname{Re}H\,\oname{Re}(\overline{H}F)+(\oname{Im}F)^2 = \frac{1}{324}(1-\cos\theta)^5(2+5\cos\theta)\nless0\;.
    \end{align*}
  \item The $(7,0)$-DDO is given by:
    \begin{equation}\label{eq:prelim_ddo_70}\small
      [\mathcal{D}_xu]_j = \frac{-\overline{u}_{\mhfg{j}{7}}-65\overline{u}_{\mhfg{j}{5}}-209\overline{u}_{\mhfg{j}{3}}-145\overline{u}_{\mhf{j}}}{8h}+\frac{8u_{j-3}+108u_{j-2}+144u_{j-1}+47u_j}{6h}\;,
    \end{equation}
    and one computes:
    \begin{displaymath}
      \oname{Re}H = \frac{16}{3}\cos^3\theta+36\cos^2\theta+20\cos\theta-\frac{61}{6}\;,
    \end{displaymath}
    which is clearly negative when $\theta$ is near $\pi/2$.
\end{itemize}
To this end, we have the following conjecture on the stability barrier of HV methods:
\begin{conjecture}\label{cj:prelim_sb}
  The semi-discretized HV method is stable if and only if $0\le R < L < R+3$ except if $(L,R)=(3,0)$, when the method is stable. 
\end{conjecture}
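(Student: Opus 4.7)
The plan is to apply the necessary-and-sufficient criterion of~\cref{lm:prelim_stab} to the explicit $H(\theta)$ and $F(\theta)=(e^{i\theta}-1)G(\theta)$ produced by the coefficient formulas in~\cref{lm:prelim_coef}, splitting the conjecture into a sufficiency direction (stability when $L-R\in\{1,2\}$ or $(L,R)=(3,0)$) and a necessity direction (instability for $L-R\ge 3$ with the same exception). The necessity direction is already partially addressed by~\cref{thm:bar_sqrt} and~\cref{thm:bar_pi}; what remains is to tighten those instability thresholds down to $L-R=3$ and to isolate the $(3,0)$ base case, while every pair with $L-R\in\{1,2\}$ must be shown stable uniformly in $L$.

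For the sufficiency direction, I would reduce each of the two stability inequalities to a sign statement on a polynomial in $x=\cos\theta$ over $[-1,1)$. Guided by the $(4,3)$ example, one expects a factorization of the form
\begin{equation*}
  \oname{Re}H\,\oname{Re}(\overline{H}F)+(\oname{Im}F)^2 = -(1-\cos\theta)^{q}\,P_{L,R}(\cos\theta),
\end{equation*}
where the exponent $q$ is dictated by the order of accuracy $p=L+R$ (forced by the fact that the leading-order cancellation near $\theta=0$ already kills the first several powers of $\theta$) and $P_{L,R}$ is a polynomial of modest degree whose sign is controlled by $L-R$. The task is then to prove, using the combinatorial form of~\cref{eq:prelim_coef} and in particular the products $C^{l,r}_k C^{l',r'}_k$ weighted by Harmonic differences $\zeta^{l,r}_k$, that (i) such a factorization exists structurally, and (ii) $P_{L,R}>0$ on $[-1,1]$ exactly when $L-R\in\{1,2\}$, together with the exceptional $(3,0)$. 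A parallel and simpler argument would establish $\oname{Re}H>0$ in the same regime, exploiting the squared-binomial structure of the $\beta_k$.

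For the necessity direction, the strategy is to exhibit a concrete witness $\theta_\star$ at which one of the two conditions fails. When $L-R\ge 4$, the argument behind~\cref{thm:bar_sqrt} and~\cref{thm:bar_pi} already produces a $\theta_\star$ with $\oname{Re}H(\theta_\star)<0$; one needs to push that analysis down to cover $L-R=4$. When $L-R=3$ with $(L,R)\ne(3,0)$, the $\hc$ pattern in~\cref{tb:prelim_stab} indicates $\oname{Re}H$ remains positive, so the obstruction must come from the second inequality; via the anticipated factorization this reduces to showing $P_{L,R}(\cos\theta_\star)<0$ for some admissible $\theta_\star$, with $\theta_\star=\pi$ a natural candidate because the algebra collapses and the sign of $P_{L,R}$ can be extracted combinatorially there. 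The lone surviving $(3,0)$ case is small enough to dispatch by direct evaluation.

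The main obstacle is the sufficiency half. Case-by-case verification of the sort in~\cref{tb:prelim_stab} cannot settle an infinite family, so one needs a structural identity that pins down the sign of $P_{L,R}$ from the four-integer index $(l,r,l',r')$ alone. The coefficients, being products of ratios $C^{l,r}_k C^{l',r'}_k$ tempered by Harmonic differences, resemble squared hypergeometric sums, which suggests that the positivity of $P_{L,R}$ might ultimately reduce to a sum-of-squares identity or to a Jacobi-type positivity result. Establishing this structural positivity, together with proving that the sign breaks \emph{exactly} at $L-R=3$ and not earlier or later, is where the bulk of the combinatorial work is expected to lie.
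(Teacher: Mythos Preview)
The statement you are attempting to prove is labeled a \emph{Conjecture} in the paper, and the paper does not prove it. The paper's contribution toward this statement is the collection of partial results in \cref{thm:bar_sqrt}, \cref{thm:bar_pi}, and \cref{thm:bar_asymp}; the full conjecture is explicitly left open (see the closing remarks in \cref{sec:concl}). Hence there is no ``paper's own proof'' to compare your proposal against, and your outline should be read as a plan of attack on an open problem rather than an alternative proof.

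That said, two concrete gaps in your plan are worth flagging. First, your claim that ``when $L-R\ge 4$, the argument behind \cref{thm:bar_sqrt} and \cref{thm:bar_pi} already produces a $\theta_\star$ with $\oname{Re}H(\theta_\star)<0$'' is not correct. \cref{thm:bar_pi} covers only $L-R\in\{4,5,6,7\}$, and \cref{thm:bar_sqrt} kicks in only once $L-R\ge\min(2R+7,\,9+\sqrt{21R+49})$; for instance $(L,R)=(9,1)$ or $(12,3)$ sit in neither range. More seriously, \cref{thm:bar_asymp} shows that the strategy of testing $\oname{Re}H(\pi)$ cannot close these gaps: for stencils $(l,r,l',r')=(t+m,t,t+m,t)$ with $m\equiv1\pmod 4$ one has $\oname{Re}H(\pi)>0$ for all large $t$, so no witness at $\theta_\star=\pi$ exists in those cases even though the method is (conjecturally) unstable. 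Any complete necessity argument must therefore go through the second inequality \cref{eq:prelim_stab_cond_g} for infinitely many stencils, not just the $L-R=3$ family you single out.

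Second, for the sufficiency direction you correctly identify the target (a structural factorization with a sign-controlled cofactor $P_{L,R}$) but offer only a heuristic for why positivity should hold. The paper itself gives no such structural result for either \cref{eq:prelim_stab_cond_h} or \cref{eq:prelim_stab_cond_g} when $L-R\in\{1,2\}$; the only evidence is the finite verification in \cref{tb:prelim_stab} and the remark after \cref{eq:bar_reh_cfun_pi} that \cref{eq:prelim_stab_cond_h} can be checked for $(l,r,l',r')=(t+1,t,t+1,t)$ via \cref{lm:bar_reh_cfun_alt}. Turning your ``sum-of-squares or Jacobi-type positivity'' suggestion into an actual identity for the expression in \cref{eq:prelim_stab_cond_g} is precisely the missing idea, and neither you nor the paper supplies it.
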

The conjecture is very similar to that of the finite-difference method in the literature: an optimally accurate finite-difference method is stable if and only if $R<L<R+3$.
It was first proved using the theory of order stars~\cite{AIserles:1982a,AIserles:1983a} and then by a more elementary method analyzing the integral form of an error function~\cite{BDespres:2009a}.
However, neither strategy seems to work in the case of hybrid-variable methods, and we explain the challenge using these two approaches in~\cref{app:rev}.

\section{Barriers of stable HV methods}
\label{sec:bar}
In this section, we establish some barrier results for stable semi-discretized methods.
The main result is given in~\cref{thm:bar_sqrt}, which confirms that given any $R$, an HV method with stencil $(L,R)$ cannot be stable for large $L$. 
Tighter barriers in special cases are given later in~\cref{thm:bar_pi} and~\cref{thm:bar_asymp}.
\begin{theorem}\label{thm:bar_sqrt}
  Let $[\mathcal{D}_x]$ be a DDO with stencil $(L,R)$ and optimal accuracy, then if $\oname{Re}H(\theta)>0$ for all $0<\theta\le\pi$, there is:
  \begin{equation}\label{eq:bar_sqrt}
    L-R < \min(2R+7,\,9+\sqrt{21R+49})\;.
  \end{equation}
\end{theorem}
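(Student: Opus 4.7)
The plan is to refute the hypothesis $\mathrm{Re}\,H(\theta)>0$ for all $0<\theta\le\pi$ by exhibiting a specific test value of $\theta$ (or a specific linear functional applied to $\mathrm{Re}\,H$) at which the expression becomes non‑positive whenever $L-R$ violates one of the two stated bounds. Concretely, I would start from the explicit formula for $H$ furnished by~\cref{lm:prelim_coef}, writing
\begin{equation*}
  \mathrm{Re}\,H(\theta) \;=\; 2\bigl(\zeta_0^{l,r}+\zeta_0^{l',r'}\bigr) \;-\; \sum_{\nu=1}^{r'}\frac{2}{\nu}C_\nu^{l,r}C_\nu^{l',r'}\cos(\nu\theta) \;+\; \sum_{\nu=1}^{l'}\frac{2}{\nu}C_{-\nu}^{l,r}C_{-\nu}^{l',r'}\cos(\nu\theta),
\end{equation*}
where the stencil parameters are recovered from $(L,R)$ via $l'=\lfloor L/2\rfloor$, $l=L-l'$, $r'=\lfloor R/2\rfloor$, $r=R-r'$, and the $C_{\pm\nu}$'s are (up to a common factor) binomial coefficients. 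Because the stencil is biased upwind ($L>R$), the second sum carries significantly more terms than the first, and intuitively this growing asymmetry is what should eventually drive $\mathrm{Re}\,H$ negative.

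The natural first probe is $\theta=\pi$, which collapses every $\cos(\nu\theta)$ to $(-1)^\nu$ and reduces $\mathrm{Re}\,H(\pi)$ to a purely arithmetic alternating sum, namely
\begin{equation*}
  \mathrm{Re}\,H(\pi) \;=\; 2\bigl(\zeta_0^{l,r}+\zeta_0^{l',r'}\bigr) \;+\; 2\sum_{\nu=1}^{r'}\frac{(-1)^{\nu+1}}{\nu}C_\nu^{l,r}C_\nu^{l',r'} \;+\; 2\sum_{\nu=1}^{l'}\frac{(-1)^{\nu}}{\nu}C_{-\nu}^{l,r}C_{-\nu}^{l',r'}.
\end{equation*}
The question then becomes: for which $(L,R)$ is this quantity non‑positive? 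I would apply the combinatoric equalities flagged in the introduction (and provided in the appendices) to either put the alternating sum in closed form or to bound it sharply by a polynomial in $L$ and $R$. Because the dominant $l'$-sum contains products of two binomial-like factors with index running up to roughly $L/2$, the leading behavior of $\mathrm{Re}\,H(\pi)$ should be quadratic in $L-R$ while the cancelling contributions grow only linearly in $R$. Rearranging this into an inequality of the form $(L-R-9)^2<21R+49$ would yield the second branch $L-R<9+\sqrt{21R+49}$ of the bound.

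For the simpler branch $L-R<2R+7$, which is tighter when $R$ is small, I would either test $\mathrm{Re}\,H$ at a slightly different point (say a well‑chosen rational multiple of $\pi$, or work with the small-$\theta$ expansion to extract a sign from the first non‑trivial coefficient) or apply a direct inspection of the $\theta=\pi$ expression exploiting the parity of $L$ and $R$, since the harmonic terms $\zeta_0^{l,r}+\zeta_0^{l',r'}$ are controlled by $\log(L/R)$ and are dominated for small $R$ by the single largest negative summand. The two bounds are then combined through the $\min$.

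The step I expect to be the main obstacle is the combinatorial one: reducing the doubly‑binomial alternating sum above to a transparent closed form or a sign‑retaining bound. Unlike the classical finite‑difference situation, each coefficient is a \emph{product} of two binomial factors coming from the cell and nodal pieces, so standard Vandermonde/Chu‑type identities do not apply directly and one needs the bespoke combinatoric equalities developed for this paper. A secondary difficulty is ensuring that the single probe $\theta=\pi$ really captures the worst case on $(0,\pi]$; if it does not deliver the sharper $\sqrt{21R+49}$ bound on its own, I would fall back on a weighted test of the form $\int_0^\pi \mathrm{Re}\,H(\theta)\,w(\theta)\,d\theta$ with $w$ supported near $\pi$ and chosen so that the moment picks up the same dominant binomial identity.
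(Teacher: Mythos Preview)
Your primary plan—probing $\oname{Re}H$ at the single point $\theta=\pi$ and extracting both bounds from the resulting doubly-binomial alternating sum—misses the paper's key simplification and leaves no visible path to the specific constants $2R+7$ and $9+\sqrt{21R+49}$. The paper never evaluates $\oname{Re}H$ at a point for this theorem. Instead it writes $\oname{Re}H(\theta)=\sum_{k=0}^{l'}h_k\cos(k\theta)$ and invokes the elementary fact (\cref{lm:bar_trigpoly_bound}) that a non-negative cosine polynomial on $[0,\pi]$ must satisfy $\abs{h_k}<2h_0$ for every $k\ge1$; violating this for \emph{any single} $k$ contradicts the hypothesis. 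With the explicit $\beta$-coefficients of \cref{lm:prelim_coef}, $h_0=2(\zeta_0^{l,r}+\zeta_0^{l',r'})$ is a short harmonic sum and each $h_k$ is a simple product (or difference of two products) of binomial ratios---no alternating summation over $k$ ever arises. The two branches of~\cref{eq:bar_sqrt} then come from two specific choices of $k$: setting $k=1$ and rearranging $h_1<2h_0$ yields the linear bound $L-R<2R+7$ in a few lines; setting $k=\lfloor m/2\rfloor$ (with $m$ essentially $(L-R)/2$), near where $h_k/h_0$ is maximal, and using only the crude estimates $(1+x)^{1/x}\ge2$ for $0<x\le1$ and $2^\gamma<1+6\gamma\Rightarrow\gamma<5$, delivers the square-root bound. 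The whole argument is a two-term coefficient comparison, not a full-sum identity.

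Your $\theta=\pi$ probe is in fact the engine behind the separate \cref{thm:bar_pi}, and even there it requires the nontrivial representation~\cref{eq:bar_reh_crep} via the auxiliary functions $C_{m,n}(\theta)$ and a derivative recursion (\cref{lm:bar_reh_cfun_der}) to make the alternating sum tractable. Moreover \cref{thm:bar_asymp} shows that $\oname{Re}H(\pi)$ is strictly \emph{positive} along infinite families with fixed $L-R\equiv2\pmod 8$, so pointwise evaluation at $\pi$ is not a reliable sign detector across regimes, and your speculation that it is ``quadratic in $L-R$'' with only linear cancellation is not borne out. Your weighted-integral fallback is actually closer in spirit---indeed $h_k=\frac{2}{\pi}\int_0^\pi\oname{Re}H(\theta)\cos(k\theta)\,d\theta$---but without identifying the specific test weights $\cos\theta$ and $\cos(\lfloor m/2\rfloor\theta)$ you would not land on the two explicit inequalities.
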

This theorem indicates that if~\cref{eq:bar_sqrt} is violated, the corresponding semi-discretized HV method is unstable.
Note that in~\cref{eq:bar_sqrt}, the linear bound is tighter than the square-root one when $R\le9$.
The proof is based on analyzing whether the $\beta$-coefficients satisfy the boundedness condition on coefficients of non-negative trigonometric polynomials, as given in~\cref{sec:bar_sqrt}.

While this theorem confirms that given a downwind stencil $R$, the HV method cannot be stable if the upwind stencil $L$ is too large; the bound is relatively loose, in the view of~\cref{tb:prelim_stab}.
Analysing HV methods with small stencil bias (i.e., $L-R$) is more relevant for practical purposes, as they tend to simplify inter-processor communications in parallel computations.
These methods, however, are much more difficult to analyse; and in what follows we present some results by investigating the sign of $\oname{Re}H(\pi)$.
For this purpose, we develop a combinatorics tool to compute $\oname{Re}H(\theta)$ in~\cref{sec:bar_reh} and have the following instability result.
\begin{theorem}\label{thm:bar_pi}
  The HV method is unstable if $4\le L-R\le 7$.
\end{theorem}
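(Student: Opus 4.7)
The plan is to prove instability by showing that condition~\cref{eq:prelim_stab_cond_h} of~\cref{lm:prelim_stab} fails at $\theta=\pi$ whenever $4\le L-R\le 7$. Since $H(\pi)=\sum_{k=-l'}^{r'}(-1)^k\beta_k$ is real, it suffices to establish $H(\pi)\le 0$: failure at a single interior point of $(0,2\pi)$ prevents $\mathcal{S}'\subseteq\mathbb{C}^+$, and concretely the two roots of~\cref{eq:prelim_semi_char} at $\theta=\pi$ have real parts summing to $H(\pi)$, so at least one of the semi-discrete eigenvalues $-\lambda$ must have non-negative real part. Substituting~\cref{eq:prelim_coef_node_zero} and~\cref{eq:prelim_coef_node_nz} into the definition of $H$ and regrouping negative-index terms via $\nu\mapsto-\nu$ yields
\begin{displaymath}
  H(\pi) = 2\bigl(\zeta_0^{l,r}+\zeta_0^{l',r'}\bigr) + 2\sum_{\mu=1}^{l'}\frac{(-1)^\mu}{\mu}C^{l,r}_{-\mu}C^{l',r'}_{-\mu} - 2\sum_{\nu=1}^{r'}\frac{(-1)^\nu}{\nu}C^{l,r}_{\nu}C^{l',r'}_{\nu}.
\end{displaymath}

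Next, I would apply the combinatoric reductions developed in~\cref{sec:bar_reh}, specialized to $\theta=\pi$, to collapse the two alternating double-product sums into closed forms involving only harmonic numbers at $l,l',r,r'$. Because $\Delta:=L-R$ is fixed while $R$ ranges over $\mathbb{Z}_{\ge 0}$, the argument then splits into cases by $\Delta\in\{4,5,6,7\}$ and by the parity of $R$, which together pin down the quadruple $(l,r,l',r')$ as linear functions of $R$. In each of the eight resulting subcases, $H(\pi)$ becomes an explicit rational expression in $R$ plus a bounded harmonic-number combination, which should be bounded above by zero uniformly in $R\ge 0$. That this split is natural is suggested by~\cref{tb:prelim_stab}: the $\xm$ pattern first appears exactly at $L-R=4$, while~\cref{thm:bar_sqrt} already covers the large-$\Delta$ regime.

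The main obstacle is the uniform-in-$R$ sign bound. The leading harmonic term $\zeta_0^{l,r}+\zeta_0^{l',r'}=(H_l-H_r)+(H_{l'}-H_{r'})$ tends to $0$ as $R\to\infty$ with $\Delta$ fixed, while the alternating binomial sums exhibit delicate cancellations of individually large contributions, so crude asymptotics will not suffice. I expect the reduction from~\cref{sec:bar_reh} to produce a factorization of the form $H(\pi)=-Q(R,\Delta)$ with $Q$ manifestly positive, or failing that, a form amenable to a monotonicity-in-$R$ argument anchored at the finitely many small-$R$ base cases already verified in~\cref{tb:prelim_stab}. Once non-positivity of $H(\pi)$ is established in all eight subcases,~\cref{lm:prelim_stab} immediately furnishes the claimed instability.
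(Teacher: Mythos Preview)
Your proposal is correct and follows essentially the paper's approach: show $\oname{Re}H(\pi)<0$ in the eight subcases determined by $\Delta\in\{4,5,6,7\}$ and the parity of $R$, using the machinery of~\cref{sec:bar_reh}. In the paper's execution, the representations~\cref{eq:bar_reh_crep}, \cref{eq:bar_reh_crep_check}, \cref{eq:bar_reh_crep_hat}, \cref{eq:bar_reh_crep_tilde} together with~\cref{lm:bar_reh_cfun_der} and~\cref{eq:bar_reh_cfun_pi} collapse each case to an explicitly negative rational function of $t$ times a binomial coefficient, so the delicate cancellations you flag are absorbed by the $C_{m,n}$-representation rather than surviving as harmonic-number combinations.
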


The proof of this theorem is given in~\cref{sec:bar_pi}, where we can see the complexity of the computation increases dramatically as $L-R$ increases.
Unfortunately, effort of carrying out the calculation along this line may not pay off for larger $L-R$, as demonstrated by the next result.
\begin{theorem}\label{thm:bar_asymp}
  Suppose $(l,r,l',r')=(t+m,t,t+m,t)$ and let $m$ be fixed.
  Then if $m\equiv1\mod4$, $\oname{Re}H(\pi)>0$ for sufficiently large $t$; and if $m\equiv3\mod4$, $\oname{Re}H(\pi)<0$ for sufficiently large $t$.
\end{theorem}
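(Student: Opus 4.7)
The plan is to obtain a tractable representation of $\oname{Re}H(\pi)$ under the symmetric upwind stencil $(l,r,l',r')=(t+m,t,t+m,t)$ by specializing the tools from \cref{sec:bar_reh}, and then to extract its leading asymptotic behavior as $t\to\infty$ with $m$ fixed.

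First, I exploit the symmetry $l=l'$ and $r=r'$ to simplify the coefficient formulas of \cref{lm:prelim_coef}: the two $\zeta$-sums in $\beta_0$ merge, yielding $\beta_0=4(H_{t+m}-H_{t})$ and $\beta_\nu=-(2/\nu)(C^{l,r}_\nu)^{2}$ for $\nu\ne0$. Writing $C^{l,r}_k=\binom{2t+m}{t+m+k}/\binom{2t+m}{t+m}$, this gives
\[
\oname{Re}H(\pi)=4\sum_{j=1}^{m}\frac{1}{t+j}-\frac{2}{\binom{2t+m}{t+m}^{2}}\sum_{\substack{k=-(t+m)\\k\ne0}}^{t}\frac{(-1)^{k}}{k}\binom{2t+m}{t+m+k}^{2}.
\]

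Next, I apply the identity developed for $\oname{Re}H(\pi)$ in \cref{sec:bar_reh}, specialized to this symmetric setting, to collapse the alternating sum into a closed form. Since the entire asymmetry between the positive-$k$ and negative-$k$ portions is encoded in $m$, I expect such a closed form to factor as a power of $1/\binom{2t+m}{t+m}$ (or a related ratio of $\Gamma$-functions) times an elementary expression in $m$; numerical experiments for small $t$ and $m\in\{1,3,5,\dots\}$ should guide its exact shape. With this representation in hand, Stirling's formula yields an asymptotic $\oname{Re}H(\pi)\sim c_{m}t^{-\alpha_{m}}$, and the theorem reduces to reading off the sign of $c_{m}$.

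The main obstacle is a delicate cancellation at leading order in $1/t$. Because $(C^{l,r}_{k})^{2}\to1$ as $t\to\infty$ for any fixed $k$, the alternating sum contributes approximately $-4m/t$, which exactly cancels the $\beta_{0}\approx 4m/t$ term; this renders term-by-term expansion useless and forces reliance on the closed-form identity from \cref{sec:bar_reh}. I anticipate that the sign factor $(-1)^{(m-1)/2}$ dictating the theorem emerges via $\sin(m\pi/2)$ from a reflection formula for $\Gamma$-functions applied in the asymptotic expansion of that closed form, thereby producing opposite signs for $m\equiv 1\pmod{4}$ and $m\equiv 3\pmod{4}$, exactly as the theorem states.
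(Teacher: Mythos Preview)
Your setup is correct, and you rightly flag the leading-order cancellation as the main obstacle. However, the proposal has a genuine gap: after identifying the cancellation, you do not supply any concrete mechanism to resolve it. You invoke ``the identity developed for $\oname{Re}H(\pi)$ in \cref{sec:bar_reh}'' to ``collapse the alternating sum into a closed form,'' but no such closed form exists there. What \cref{sec:bar_reh} actually provides is the representation~\cref{eq:bar_reh_crep}, expressing $\oname{Re}H(\theta)$ as a finite linear combination of even-order derivatives of $C_{2t+2m,t+m}(\theta)$ with coefficients $q_{t,m;j}$. That representation is the right starting point, but it does not by itself collapse anything; the substantive work of the paper's proof lies in (i) finding the asymptotics $q_{t,m;j}\sim 2\binom{2m}{2j+1}t^{-2j-1}$, (ii) deriving by induction an explicit expansion of $\frac{d^{2j}}{d\theta^{2j}}C_{2n,n}$ in terms of $C_{2n-j+k,n-j+k}$ and certain combinatorial weights $Z_{j-k,k}$, and (iii) evaluating everything at $\theta=\pi$ via $C_{m,n}(\pi)=\binom{m}{n}$. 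Your plan skips all three steps and replaces them with ``numerical experiments \ldots\ should guide its exact shape,'' which is not a proof.

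Your anticipated source for $\sin(m\pi/2)$ is also incorrect. No $\Gamma$-reflection formula enters; after steps (i)--(iii) the leading term reduces to a \emph{finite} sum $\sum_{j=0}^{m-1}(-1)^{j}\binom{2m}{2j+1}$, which equals $2^{m}\sin(m\pi/2)$ by reading off the imaginary part of $(1+i)^{2m}$. The upshot is
\[
\oname{Re}H(\pi)=\binom{2t+2m}{t+m}^{-1}\frac{2^{m+1}}{t}\sin\frac{m\pi}{2}\bigl(1+O(1/t)\bigr),
\]
from which the theorem follows. If you want your alternating-sum representation (with $\binom{2t+m}{t+m+k}^{2}$) to succeed directly, you would need a genuinely different cancellation argument; as it stands, your proposal neither provides one nor follows the paper's route.
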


\subsection{Proof of~\cref{thm:bar_sqrt}}
\label{sec:bar_sqrt}
Consider an arbitrary trigonometric polynomial:
\begin{equation}\label{eq:bar_trigpoly}
  C(\theta) = \sum_{k=0}^nc_k\cos(k\theta)\;,\quad c_k\in\mathbb{R}\;,
\end{equation}
then a simple bound on the coefficients are given in~\cref{lm:bar_trigpoly_bound} (see also~\cite{AGluchoff:1998a}).
\begin{lemma}\label{lm:bar_trigpoly_bound}
  If~\cref{eq:bar_trigpoly} is non-negative for $0\le\theta\le\pi$, then $\abs{c_k}<2c_0$ for all $1\le k\le m$.
\end{lemma}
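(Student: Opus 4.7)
The plan is to bound the coefficients by testing $C(\theta)$ against the non-negative weights $1\pm\cos(k\theta)$ and using orthogonality of cosines on $[0,2\pi]$. First, since $C$ is a sum of cosines it is even in $\theta$, and it is $2\pi$-periodic, so the hypothesis that $C(\theta)\ge0$ on $[0,\pi]$ extends immediately to $C(\theta)\ge0$ on all of $\mathbb{R}$. This lets me integrate against test functions over the full period $[0,2\pi]$ without losing sign information.

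The key step is to fix $k$ with $1\le k\le n$ and consider the two integrals
\[
I_\pm \;=\; \int_0^{2\pi} C(\theta)\bigl(1\pm\cos(k\theta)\bigr)\,d\theta.
\]
Both integrands are non-negative pointwise (each factor is), so $I_\pm\ge0$. Expanding $C(\theta)$ and using the orthogonality relations $\int_0^{2\pi}\cos(j\theta)\cos(k\theta)\,d\theta=\pi\delta_{jk}$ for $j,k\ge 1$ together with $\int_0^{2\pi}\cos(k\theta)\,d\theta=0$, I obtain $I_\pm=2\pi c_0\pm\pi c_k$. The non-negativity of both $I_+$ and $I_-$ therefore yields the two-sided estimate $|c_k|\le 2c_0$.

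The remaining task is to upgrade this to a strict inequality. Suppose, for contradiction, that $|c_k|=2c_0$; then one of $I_+$, $I_-$ vanishes. Because the corresponding integrand is continuous and pointwise non-negative, it must be identically zero on $[0,2\pi]$. The factor $1\pm\cos(k\theta)$ only vanishes at a finite set of points, so continuity of $C$ forces $C\equiv 0$, which contradicts the assumption that $C$ is a nontrivial trigonometric polynomial (equivalently, that $c_0>0$; note $c_0=0$ together with $C\ge 0$ already gives $C\equiv 0$). Hence $|c_k|<2c_0$ whenever $C$ is not identically zero, as will be the case in every subsequent application to the coefficients of $\oname{Re}H(\theta)$.

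I do not anticipate a genuine obstacle here: the Fejér–style orthogonality argument is short and the only mild delicacy is the passage from non-strict to strict inequality, handled above by the continuity rigidity argument.
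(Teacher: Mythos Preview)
Your proof is correct and follows essentially the same orthogonality-plus-continuity idea as the paper. The paper's version is marginally more direct: it stays on $[0,\pi]$, writes $c_k=\frac{2}{\pi}\int_0^\pi C(\theta)\cos(k\theta)\,d\theta$, and obtains the strict bound in one line via $|c_k|\le\frac{2}{\pi}\int_0^\pi C(\theta)\,|\cos(k\theta)|\,d\theta<\frac{2}{\pi}\int_0^\pi C(\theta)\,d\theta=2c_0$, whereas you test against the non-negative weights $1\pm\cos(k\theta)$ over $[0,2\pi]$ and then upgrade to strictness by a separate continuity argument.
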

\begin{proof}
  By orthogonality one has:
  \begin{displaymath}
    c_0 = \frac{1}{\pi}\int_0^{\pi}C(\theta)d\theta\;;\quad
    c_k = \frac{2}{\pi}\int_0^{\pi}C(\theta)\cos(k\theta)d\theta\;,\ 1\le k\le m\;.
  \end{displaymath}
  Thus if $C(\theta)\ge0$, one has $c_0\ge0$ and for all $1\le k\le m$:
  \begin{displaymath}
    \abs{c_k}\le\frac{2}{\pi}\int_0^\pi\abs{C(\theta)}\abs{\cos(k\theta)}d\theta<\frac{2}{\pi}\int_0^{\pi}C(\theta)d\theta = 2c_0\;.
  \end{displaymath}
  Note that the strict inequality is due to the fact that $C(\theta)$ is continuous and equality requires $C(\theta)=0$ for all $\cos(k\theta)\ne\pm1$.
\end{proof}

Writing:
\begin{equation}\label{eq:bar_sqrt_reH}
  \oname{Re}H(\theta) =  \sum_{k=0}^{l'}h_k\cos(k\theta) = \beta_0+\sum_{k=1}^{r'}(\beta_k+\beta_{-k})\cos(k\theta)+\sum_{k=r'+1}^{l'}\beta_{-k}\cos(k\theta)\;,
\end{equation}
where the $\beta$-coefficients are given by~\cref{eq:prelim_coef_node_zero} and~\cref{eq:prelim_coef_node_nz}.
The idea is to show that if $L-R$ is sufficiently large, there will be some $k\ge1$ such that $\abs{h_k}\ge 2h_0$, hence by~\cref{lm:bar_trigpoly_bound} $\oname{Re}H(\theta)$ cannot be non-negative.

For this purpose, we shall distinguish four different cases: (1) $L=2t+2m, R=2t$, (2) $L=2t+2m+1, R=2t$, (3) $L=2t+2m+1, R=2t+1$, and (4) $L=2t+2m+2, R=2t+1$, where $t$ and $m$ are non-negative integers such that $L>R$.
And we assume that $\oname{Re}H(\theta)\ge0$ for all $0\le\theta\le\pi$.

\medskip

\noindent
{\bf Case 1: $L=2t+2m$, $R=2t$, where $t\ge0$ and $m\ge1$}.

\smallskip

In this case, the four-index stencil is $(l,r,l',r')=(t+m,t,t+m,t)$ and one has:
\begin{align*}
  h_0 &= 4\zeta_0^{t+m,t} = 4\left(\frac{1}{t+1}+\cdots+\frac{1}{t+m}\right)\;; \\
  h_k &= \frac{2}{k}\left[\left(\frac{(t+m)!t!}{(t+m-k)!(t+k)!}\right)^2-\left(\frac{(t+m)!t!}{(t+m+k)!(t-k)!}\right)^2\right]\;,\quad1\le k\le t\;; \\
  h_k &= \frac{2}{k}\left(\frac{(t+m)!t!}{(t+m-k)!(t+k)!}\right)^2\;,\quad t+1\le k\le t+m\;.
\end{align*}
It is easy to see that all $h$'s are positive, and for all $1\le k\le t$ there is $(t+m)!t!<(t+m+k)!(t-k)!$; hence it is necessary for all $1\le k\le t+m$:
\begin{equation}\label{eq:bar_sqrt_case1_nec_raw}
  \frac{2}{k}\left[\left(\frac{(t+m)!t!}{(t+m-k)!(t+k)!}\right)^2-1\right] < h_k < 2h_0 = 8\left(\frac{1}{t+1}+\cdots+\frac{1}{t+m}\right)\;,
\end{equation}
or equivalently:
\begin{equation}\label{eq:bar_sqrt_case1_nec}
  2\left[\ln\left(1+\frac{m-k}{t+1}\right)+\cdots\ln\left(1+\frac{m-k}{t+k}\right)\right] < \ln\left(1+\frac{4k}{t+1}+\cdots+\frac{4k}{t+m}\right)\;.
\end{equation}
A bound on $m$ can already be obtained by setting $k=1$, in which case~\cref{eq:bar_sqrt_case1_nec} gives:
\begin{align}
  \notag
  &\ 2\ln\left(1+\frac{m-1}{t+1}\right)<\ln\left(1+\frac{4}{t+1}+\cdots+\frac{4}{t+m}\right)<\ln\left(1+\frac{4m}{t+1}\right) \\
  \label{eq:bar_sqrt_case1_linb}
  \Rightarrow&\ \frac{(m-1)^2}{2(m+1)}<t+1 \Leftrightarrow m-3+\frac{4}{m+1}<2t+2 \Rightarrow m\le 2t+4\;.
\end{align}
Note that the bound can be improved to $m\le 2t+3$ using the fact that $1/x$ is convex, hence $\frac{4}{t+1}+\cdots+\frac{4}{t+m}\le\frac{2m}{t+1}+\frac{2m}{t+m}$ and one must have:
\begin{displaymath}
  \left(1+\frac{m-1}{t+1}\right)^2<1+\frac{2m}{t+1}+\frac{2m}{t+m}\;;
\end{displaymath}
letting $m=2t+4$ one has $\frac{1}{t+m}<\frac{1}{3t+3}$ and the preceding inequality gives:
\begin{displaymath}
  \left(3+\frac{1}{t+1}\right)^2<1+\frac{2(2t+4)}{t+1}+\frac{2(2t+4)}{3t+3} = \frac{19}{3}+\frac{16/3}{t+1} < 7+\frac{6}{t+1}\;, 
\end{displaymath}
which is clearly a contradiction.
Thus one has $m\le2t+3$.

To get a better bound, we want to pick a $k$ such that the left hand side of~\cref{eq:bar_sqrt_case1_nec_raw} is approximately the largest.
For this purpose, we pick $k=\floor{m/2}$ and have $k\le m-k$, hence:
\begin{align}
  \notag
  &\ 2\left[\ln\left(1+\frac{k}{t+1}\right)+\cdots\ln\left(1+\frac{k}{t+k}\right)\right] \le 2\left[\ln\left(1+\frac{m-k}{t+1}\right)+\cdots\ln\left(1+\frac{m-k}{t+k}\right)\right] \\
  \label{eq:bar_sqrt_case1_halfm}
  <&\ \ln\left(1+\frac{4k}{t+1}+\cdots+\frac{4k}{t+m}\right)
  \le \ln\left(1+\frac{4k}{t+1}+\cdots+\frac{4k}{t+2k+1}\right)\;.
\end{align}
As $\ln(1+x)$ is concave, the left hand side satisfies:
\begin{displaymath}
  2\left[\ln\left(1+\frac{k}{t+1}\right)+\cdots\ln\left(1+\frac{k}{t+k}\right)\right] \ge k\ln\left(1+\frac{k}{t+1}\right)+k\ln\left(1+\frac{k}{t+k}\right)\;;
\end{displaymath}
and the right hand side clearly has the estimate:
\begin{displaymath}
  \frac{4k}{t+1}+\cdots+\frac{4k}{t+2k+1} \le (4k^2+2k)\left(\frac{1}{t+1}+\frac{1}{t+k}\right)\;.
\end{displaymath}
Thus~\cref{eq:bar_sqrt_case1_halfm} leads to:
\begin{equation}\label{eq:bar_sqrt_case1_halfm_simp}
  \left(1+\frac{k}{t+1}\right)^k\left(1+\frac{k}{t+k}\right)^k < 1 + \frac{4k^2+2k}{t+1} + \frac{4k^2+2k}{t+k}\;.
\end{equation}

Because $m\le2t+3$, there is $k\le t+1$. 
It is elemetary to show if $x\le1$, $(1+x)^{\frac{1}{x}}\ge2$; setting $x=\frac{k}{t+1}$ and $x=\frac{k}{t+k}$ one gets respectively $\left(1+\frac{k}{t+1}\right)^k \ge 2^{\frac{k^2}{t+1}}$ and $\left(1+\frac{k}{t+k}\right)^k \ge 2^{\frac{k^2}{t+k}}$.
Denoting $\gamma=\frac{k^2}{t+1}+\frac{k^2}{t+k}$, one thus has from~\cref{eq:bar_sqrt_case1_halfm_simp}:
\begin{displaymath}
  2^{\gamma} \le \left(1+\frac{k}{t+1}\right)^k\left(1+\frac{k}{t+k}\right)^k \le 1+\frac{4k^2+2k}{t+1}+\frac{4k^2+2k}{t+k} \le 1+6\gamma\;.
\end{displaymath}
It follows that $\gamma<5$ and:
\begin{displaymath}
  \frac{2k^2}{t+k}\le\gamma<5\quad\Rightarrow\quad k<\frac{5}{4}+\sqrt{\frac{5}{2}t+\frac{25}{16}}\;.
\end{displaymath}
Using $L-R=2m\le4k+2$ and $R=2t$, the preceding inequality gives:
\begin{displaymath}
  L-R \le 7+\sqrt{20R+25} < 7+\sqrt{21R+49}\;.
\end{displaymath}
Combining with $L-R=2m\le4t+6=2R+6$, we obtain $L-R\le\min(2R+6,9+\sqrt{21R+49})$.

\medskip

The other three cases are handled similarly, and we omit the details here and leave them in~\cref{app:sqrt}.
To this end, we finish the proof of~\cref{thm:bar_sqrt}.

\subsection{Combinatorics and ReH}
\label{sec:bar_reh}
In this section we establish some combinatorics tools that help with proving~\cref{thm:bar_pi} and~\cref{thm:bar_asymp}.
Let us begin with the simpler case $(L,R)=(2t+2m,2t)$ or $(l,r,l',r')=(t+m,t,t+m,t)$ as before and have:
\begin{align}
  \notag
  \oname{Re}H(\theta) =&\ 4\zeta_0^{t+m,t}+\sum_{k=1}^t\frac{2}{k}\left[\left(\frac{(t+m)!t!}{(t+m-k)!(t+k)!}\right)^2-\left(\frac{(t+m)!t!}{(t+m+k)!(t-k)!}\right)^2\right]\cos(k\theta) \\
  \label{eq:bar_reh_sym}
  &\ +\sum_{k=t+1}^{t+m}\frac{2}{k}\left(\frac{(t+m)!t!}{(t+m-k)!(t+k)!}\right)^2\cos(k\theta)\;.
\end{align}
The key is to define the polynomial $P_{t,m}(x)=(x+t+1)^{(m)}$, where $(x)^{(m)}=x(x+1)\cdots(x+m-1)$ is the rising factorial, then for all $1\le k\le t$:
\begin{displaymath}
  \left(\frac{(t+m)!t!}{(t+m-k)!(t+k)!}\right)^2-\left(\frac{(t+m)!t!}{(t+m+k)!(t-k)!}\right)^2 
  = \left(\frac{(t+m)!t!}{(t+m+k)!(t+m-k)!}\right)^2Q_{t,m}(k)\;,
\end{displaymath}
where $Q_{t,m}(x)\eqdef P^2_{t,m}(x)-P^2_{t,m}(-x)$.
Noting that $P_{t,m}(t+j)=0$, $1\le j\le m$, one immediately obtains for $t+1\le k\le t+m$:
\begin{displaymath}
  \left(\frac{(t+m)!t!}{(t+m-k)!(t+k)!}\right)^2 = \left(\frac{(t+m)!t!}{(t+m+k)!(t+m-k)!}\right)^2P_{t,m}^2(k) = \left(\frac{(t+m)!t!}{(t+m+k)!(t+m-k)!}\right)^2Q_{t,m}(k)\;. 
\end{displaymath}
Thus~\cref{eq:bar_reh_sym} can be re-written as:
\begin{displaymath}
  \oname{Re}H(\theta) = 4\zeta_0^{t+m,t}+\sum_{k=1}^{t+m}\frac{2Q_{t,m}(k)}{k}\left(\frac{(t+m)!t!}{(t+m-k)!(t+m+k)!}\right)^2\cos(k\theta)\;.
\end{displaymath}

To move on, we introduce the following notation: Let $P(x)$ be a polynomial in $x$, then we write:
\begin{equation}\label{eq:bar_reh_poly}
  P(x) = [P]_0 + [P]_1x + \cdots + [P]_nx^n\;;
\end{equation}
that is, $[P]_k$ is the coefficient of $x^k$, $k\ge0$ in $P$ and $n$ is the degree.
It is clear that one can define $[P]_k=0$ for all $k>n$ so that the notation holds for all $k\ge0$.
For example:
\begin{equation}\label{eq:bar_reh_p_coef}
  [P_{t,m}]_0 = (t+1)\cdots(t+m) = \frac{(t+m)!}{t!}\;,\quad
  [P_{t,m}]_1 = \frac{(t+m)!}{t!}\zeta_0^{t+m,t}\;.
\end{equation}
Noting that:
\begin{displaymath}
  \frac{Q_{t,m}(x)}{x} = \frac{P_{t,m}(x)-P_{t,m}(-x)}{x}\cdot\left[P_{t,m}(x)+P_{t,m}(-x)\right]
\end{displaymath}
one has $Q_{t,m}(x)/x$ is an even polynomial with constant term:
\begin{displaymath}
  \left[\frac{Q_{t,m}(x)}{x}\right]_0 = 4[P_{t,m}]_1[P_{t,m}]_0 = 4\zeta_0^{t+m,t}\left(\frac{(t+m)!}{t!}\right)^2\;.
\end{displaymath}
Let us write:
\begin{equation}\label{eq:bar_reh_q}
  \frac{Q_{t,m}(x)}{x} = \left(\frac{(t+m)!}{t!}\right)^2\left[q_{t,m;0}+q_{t,m;1}x^2+\cdots+q_{t,m;m-1}x^{2(m-1)}\right]\;,
\end{equation}
then $q_{t,m;0}=4\zeta_0^{t+m,t}$ and one has:
\begin{displaymath}
  \oname{Re}H(\theta) = q_{t,m;0}+2\sum_{k=1}^{t+m}\left(\sum_{j=0}^{m-1}q_{t,m;j}k^{2j}\right)\left(\frac{(t+m)!(t+m)!}{(t+m-k)!(t+m+k)!}\right)^2\cos(k\theta)\;.
\end{displaymath}
For convenience, we define a function $C_{m,n}(\theta)$ with two indices as follows:
\begin{equation}\label{eq:bar_reh_cfun}
  C_{m,n}(\theta) = \sum_k\binom{m}{n+k}\binom{m}{n-k}\cos(k\theta)\;,
\end{equation}
where the summation is taken over all integers and we adopt the convention that $\binom{m}{n}=0$ for all $n<0$ or $n>m$.
Then the real part of $H(\theta)$ has the following formulation:
\begin{equation}\label{eq:bar_reh_crep}
  \oname{Re}H(\theta) = \binom{2t+2m}{t+m}^{-2}\sum_{j=0}^{m-1}(-1)^jq_{t,m;j}\frac{d^{2j}}{d\theta^{2j}}C_{2t+2m,t+m}(\theta)\;.
\end{equation}
This equation plays a critical role in the proof of~\cref{thm:bar_pi}, for which we'll need the following results that are proved in~\cref{app:comb}.
\begin{lemma}\label{lm:bar_reh_cfun_alt}
  For all $m\ge n\ge0$, there is:
  \begin{equation}\label{eq:bar_reh_cfun_alt}
    C_{m,n}(\theta) = \sum_k\binom{m}{m-k}\binom{2k}{2n}\sin^{2(m-k)}\frac{\theta}{2}\cos^{2(k-n)}\frac{\theta}{2}\;.
  \end{equation}
\end{lemma}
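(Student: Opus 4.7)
The plan is to reduce the stated identity to a single-variable coefficient-extraction problem and then exploit an elementary sum-of-squares factorization. First, since $\binom{m}{n+k}\binom{m}{n-k}$ is invariant under $k\mapsto -k$, I would upgrade the cosine sum to the exponential sum $C_{m,n}(\theta) = \sum_k \binom{m}{n+k}\binom{m}{n-k}e^{ik\theta}$. Reindexing $j=n+k$ recognizes this Vandermonde-type convolution as a coefficient extraction:
\[
  C_{m,n}(\theta) = e^{-in\theta}[x^{2n}]\bigl((1+x)^m(1+xe^{i\theta})^m\bigr).
\]
To symmetrize the two factors, I would use $1+e^{i\theta}=2e^{i\theta/2}\cos(\theta/2)$ and substitute $z=xe^{i\theta/2}$; a short calculation gives $(1+x)(1+xe^{i\theta}) = 1+2z\cos(\theta/2)+z^2$, and tracking the phase $x^{2n}=z^{2n}e^{-in\theta}$ reduces the task to
\[
  C_{m,n}(\theta) = [z^{2n}]\bigl(1+2z\cos(\theta/2)+z^2\bigr)^m.
\]

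Next I would complete the square to obtain the key identity
\[
  1 + 2z\cos(\theta/2) + z^2 = \bigl(z+\cos(\theta/2)\bigr)^2 + \sin^2(\theta/2),
\]
which produces exactly the $\sin^2(\theta/2)$ and $\cos^2(\theta/2)$ factors appearing on the right-hand side of \cref{eq:bar_reh_cfun_alt}. Two applications of the binomial theorem then finish the argument: expanding the outer $m$-th power with $a = \sin^2(\theta/2)$ and $b=(z+\cos(\theta/2))^2$ yields $\sum_k \binom{m}{k}\sin^{2(m-k)}(\theta/2)(z+\cos(\theta/2))^{2k}$, after which expanding $(z+\cos(\theta/2))^{2k}$ and extracting the coefficient of $z^{2n}$ gives $\binom{2k}{2n}\cos^{2(k-n)}(\theta/2)$. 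Using $\binom{m}{k}=\binom{m}{m-k}$ matches the stated form.

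The main obstacle is spotting the sum-of-squares rewriting of the quadratic $1+2z\cos(\theta/2)+z^2$; once that is in hand every other step is routine. As a consistency check, $\binom{2k}{2n}=0$ when $k<n$ and $\binom{m}{m-k}=0$ when $k>m$, so the sum on the right of \cref{eq:bar_reh_cfun_alt} is automatically supported on $n\le k\le m$, in agreement with the hypothesis $m\ge n\ge 0$. A direct verification at $m=2$, $n=1$ (which gives $C_{2,1}(\theta) = 4+2\cos\theta = 2\sin^2(\theta/2)+6\cos^2(\theta/2)$) provides a convenient sanity check before writing the formal proof.
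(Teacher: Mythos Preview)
Your proposal is correct and follows essentially the same route as the paper: both arguments identify $C_{m,n}(\theta)$ with the $x^{2n}$-coefficient of $(1+x)^m(1+e^{i\theta}x)^m$ and then rewrite the quadratic base as a sum of squares, $\sin^2\tfrac{\theta}{2}+(\cos\tfrac{\theta}{2}+e^{i\theta/2}x)^2$, before expanding binomially. Your extra substitution $z=xe^{i\theta/2}$ is a cosmetic convenience that makes the quadratic real before completing the square, but the underlying identity and the two binomial expansions are identical to the paper's.
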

\begin{lemma}\label{lm:bar_reh_cfun_der}
  For all $m,n\ge1$, there is:
  \begin{equation}\label{eq:bar_reh_cfun_der}
    C''_{m,n}(\theta) = m^2C_{m-1,n-1}(\theta)-n^2C_{m,n}(\theta)\;.
  \end{equation}
\end{lemma}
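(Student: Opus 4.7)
The proof I have in mind is a direct differentiation followed by a single application of the absorption identity $j\binom{m}{j}=m\binom{m-1}{j-1}$. Starting from the definition \cref{eq:bar_reh_cfun} and differentiating term by term,
$$C''_{m,n}(\theta) \;=\; -\sum_{k}k^{2}\binom{m}{n+k}\binom{m}{n-k}\cos(k\theta).$$
The algebraic crux is to rewrite $-k^{2} = (n-k)(n+k) - n^{2}$, which splits the sum into two pieces. The $-n^{2}$ piece is exactly $-n^{2}C_{m,n}(\theta)$, matching one of the two terms on the right-hand side of \cref{eq:bar_reh_cfun_der}.

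For the remaining piece I would apply absorption to each binomial factor separately: taking $j=n+k$ gives $(n+k)\binom{m}{n+k} = m\binom{m-1}{n+k-1}$, and taking $j=n-k$ gives $(n-k)\binom{m}{n-k} = m\binom{m-1}{n-k-1}$. Multiplying these yields
$$(n-k)(n+k)\binom{m}{n+k}\binom{m}{n-k} \;=\; m^{2}\binom{m-1}{(n-1)+k}\binom{m-1}{(n-1)-k},$$
and summing this identity against $\cos(k\theta)$ over all integers $k$ reconstructs precisely $m^{2}C_{m-1,n-1}(\theta)$, which is the other term of \cref{eq:bar_reh_cfun_der}.

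I do not foresee a genuine obstacle: the lemma collapses to the elementary splitting $k^{2}=n^{2}-(n-k)(n+k)$ combined with one classical binomial absorption applied twice. The only care needed is that the summations run over all integers, so the absorption identity must be interpreted with the stated convention that $\binom{m-1}{j-1}=0$ whenever $j$ lies outside $\{1,\dots,m\}$; under this convention both sides of the intermediate identity vanish simultaneously at the boundary indices, so no separate case analysis is required and the term-by-term manipulation is valid for every $k\in\mathbb{Z}$.
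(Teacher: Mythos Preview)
Your proof is correct and complete. The splitting $-k^{2}=(n+k)(n-k)-n^{2}$ followed by the double absorption $(n\pm k)\binom{m}{n\pm k}=m\binom{m-1}{n\pm k-1}$ does exactly what is needed, and your remark about the boundary indices under the convention $\binom{m}{j}=0$ for $j\notin\{0,\dots,m\}$ is the right justification.

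The paper takes a slightly different route: it introduces the three-index auxiliary functions $C_{m_1,m_2;n}$, $S_{m_1,m_2;n}$, $c_{m_1,m_2;n}$, $s_{m_1,m_2;n}$ and records their first-derivative relations, each obtained from the linear splitting $k=(n+k)-n$ and a \emph{single} absorption. Composing two first-derivative identities then produces cross terms $\pm mn\,c_{m-1,m;n-1}$ that cancel, leaving $m^{2}C_{m-1,n-1}-n^{2}C_{m,n}$. Your argument short-circuits this by working directly with $k^{2}$ and applying absorption to both binomial factors at once, so no intermediate functions or cross-term cancellation appear. The paper's detour builds a small calculus of auxiliary functions that could in principle be reused, but for this particular lemma your one-step approach is more economical.
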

Note that a direct consequence of~\cref{lm:bar_reh_cfun_alt} is:
\begin{equation}\label{eq:bar_reh_cfun_pi}
  C_{m,n}(\pi) = \binom{m}{n}\;.
\end{equation}
\begin{remark}
  When $m=1$, \cref{eq:bar_reh_crep} reduces to $\oname{Re}H(\theta) = 4\zeta_0^{t+1,t}\binom{2t+2}{t+1}^{-2}C_{2t+2,t+1}(\theta)$, which is non-negative by~\cref{lm:bar_reh_cfun_alt}.
  This actually verifies~\cref{eq:prelim_stab_cond_h} for methods with stencil $(l,r,l',r')=(t+1,t,t+1,t)$.
  It is not a stability proof, though, as we are still lacking~\cref{eq:prelim_stab_cond_g} that is much more complicated.
\end{remark}

\bigskip

In the rest of this section we show that the same strategy can be used to study the other stencils.
When $(L,R)=(2t+2m+1,2t+1)$ or $(l,r,l',r')=(t+m+1,t+1,t+m,t)$, one has:
\begin{displaymath}
  \oname{Re}H(\theta) = 2(\zeta_0^{t+m+1,t+1}+\zeta_0^{t+m,t}) + \sum_{k=1}^{t+m}\frac{\binom{2t+2m+1}{t+m+k}\binom{2t+2m+1}{t+m-k}}{\binom{2t+2m+1}{t+m}^2}\frac{(t+1)!t!}{(t+m+1)!(t+m)!}\frac{2\check{Q}_{t,m}(k)}{k}\cos(k\theta)\;,
\end{displaymath}
where $\check{Q}_{t,m}(x)\eqdef P_{t+1,m}(x)P_{t,m}(x)-P_{t+1,m}(-x)P_{t,m}(-x)$ and $\check{Q}_{t,m}(x)/x$ is an even polynomial of degree $2(m-1)$, whose constant term is given by:
\begin{displaymath}
  \left[\frac{\check{Q}_{t,m}(x)}{x}\right]_0 
  = 2[P_{t+1,m}]_0[P_{t,m}]_1+2[P_{t+1,m}]_1[P_{t,m}]_0
  = \frac{2(t+m+1)!(t+m)!}{(t+1)!t!}(\zeta_0^{t+m,t}+\zeta_0^{t+m+1,t+1})\;.
\end{displaymath}
Writing:
\begin{equation}\label{eq:bar_reh_q_check}
  \frac{\check{Q}_{t,m}(x)}{x} = \frac{(t+m+1)!(t+m)!}{(t+1)!t!}\left[\check{q}_{t,m;0}+\check{q}_{t,m;1}x^2+\cdots+\check{q}_{t,m;m-1}x^{2(m-1)}\right]\;,
\end{equation}
where $\check{q}_{t,m;0}=2(\zeta_0^{t+m+1,t+1}+\zeta_0^{t+m,t})$, one has the next representation of $\oname{Re}H(\theta)$:
\begin{equation}\label{eq:bar_reh_crep_check}
  \oname{Re}H(\theta) = \binom{2t+2m+1}{t+m}^{-2}\sum_{j=0}^{m-1}(-1)^j\check{q}_{t,m;j}\frac{d^{2j}}{d\theta^{2j}}C_{2t+2m+1,t+m}(\theta)\;.
\end{equation}

\medskip

Next consider $(L,R)=(2t+2m+1,2t)$ or $(l,r,l',r')=(t+m+1,t,t+m,t)$, then:
\begin{displaymath}
  \oname{Re}H(\theta) = 2(\zeta_0^{t+m+1,t+1}+\zeta_0^{t+m,t}) + \sum_{k=1}^{t+m}\frac{\binom{2t+2m+1}{t+m+k}\binom{2t+2m+1}{t+m-k}}{\binom{2t+2m+1}{t+m}^2}\frac{t!t!}{(t+m+1)!(t+m)!}\frac{2\hat{Q}_{t,m}(k)}{k}\cos(k\theta)\;,
\end{displaymath}
where $\hat{Q}_{t,m}(x)=P_{t,m+1}(x)P_{t,m}(x)-P_{t,m+1}(-x)P_{t,m}(-x)$.
It follows that $\hat{Q}_{t,m}(x)/x$ is an even polynomial of degree $2m$ with constant term $\frac{2(t+m+1)!(t+m)!}{t!t!}(\zeta_0^{t+m+1,t}+\zeta_0^{t+m,t})$.
Writing:
\begin{equation}\label{eq:bar_reh_q_hat}
  \frac{\hat{Q}_{t,m}(x)}{x} = \frac{(t+m+1)!(t+m)!}{t!t!}\left[\hat{q}_{t,m;0}+\hat{q}_{t,m;1}x^2+\cdots+\hat{q}_{t,m;m}x^{2m}\right]\;,
\end{equation}
we obtain the following formula of $\oname{Re}H(\theta)$:
\begin{equation}\label{eq:bar_reh_crep_hat}
  \oname{Re}H(\theta) = \binom{2t+2m+1}{t+m}^{-2}\sum_{j=0}^{m}(-1)^j\hat{q}_{t,m;j}\frac{d^{2j}}{d\theta^{2j}}C_{2t+2m+1,t+m}(\theta)\;.
\end{equation}

\medskip

Lastly consider $(L,R)=(2t+2m+2,2t)$ or $(l,r,l',r')=(t+m+2,t+1,t+m,t)$, then:
\begin{displaymath}
  \oname{Re}H(\theta) = 2(\zeta_0^{t+m+2,t+1}+\zeta_0^{t+m,t}) + \sum_{k=1}^{t+m}\frac{\binom{2t+2m+2}{t+m+k}\binom{2t+2m+2}{t+m-k}}{\binom{2t+2m+2}{t+m}^2}\frac{(t+1)!t!}{(t+m+2)!(t+m)!}\frac{2\tilde{Q}_{t,m}(k)}{k}\cos(k\theta)\;,
\end{displaymath}
where $\tilde{Q}_{t,m}(x)=P_{t+1,m+1}(x)P_{t,m}(x)-P_{t+1,m+1}(-x)P_{t,m}(-x)$.
Then $\tilde{Q}_{t,m}(x)/x$ is an even polynomial of degree $2m$ with constant term $\frac{2(t+m+2)!(t+m)!}{(t+1)!t!}(\zeta_0^{t+m+2,t+1}+\zeta_0^{t+m,t})$.
Defining
\begin{equation}\label{eq:bar_reh_q_tilde}
  \frac{\tilde{Q}_{t,m}(x)}{x} = \frac{(t+m+2)!(t+m)!}{(t+1)!t!}\left[\tilde{q}_{t,m;0}+\tilde{q}_{t,m;1}x^2+\cdots+\tilde{q}_{t,m;m}x^{2m}\right]\;,
\end{equation}
we have the next form of $\oname{Re}H(\theta)$:
\begin{equation}\label{eq:bar_reh_crep_tilde}
  \oname{Re}H(\theta) = \binom{2t+2m+2}{t+m}^{-2}\sum_{j=0}^{m}(-1)^j\tilde{q}_{t,m;j}\frac{d^{2j}}{d\theta^{2j}}C_{2t+2m+2,t+m}(\theta)\;.
\end{equation}


\subsection{Proof of~\cref{thm:bar_pi}}
\label{sec:bar_pi}
The proof of instability of different HV schemes is organized in a way such that those using the same representation of $\oname{Re}H$ are studied together, as summarized in~\cref{tb:bar_pi_comb}.
\begin{table}\centering
  \caption{The order in which the HV schemes whose instability is proved in~\cref{sec:bar_pi} and related representation of $\oname{Re}H$ that is used.}
  \label{tb:bar_pi_comb}
  \begin{tabular}{@{}lclll@{}}
    \toprule[.5mm]
    $\oname{Re}H$ & & \multicolumn{3}{l}{$L-R$ and $(l,r,l',r')$} \\ \cmidrule[.2mm]{1-5}
    \cref{eq:bar_reh_crep}       & & $4$,~~$(t+2,t,t+2,t)$;   & & $6$,~~$(t+3,t,t+3,t)$.   \\
    \cref{eq:bar_reh_crep_check} & & $4$,~~$(t+3,t+1,t+2,t)$; & & $6$,~~$(t+4,t+1,t+3,t)$. \\
    \cref{eq:bar_reh_crep_hat}   & & $5$,~~$(t+3,t,t+2,t)$;   & & $7$,~~$(t+4,t,t+3,t)$.   \\
    \cref{eq:bar_reh_crep_tilde} & & $5$,~~$(t+4,t+1,t+2,t)$; & & $7$,~~$(t+5,t+1,t+3,t)$. \\
    \bottomrule[.4mm]
  \end{tabular}
\end{table}

\medskip

\noindent
{\it 1. $L-R=4$ and $(l,r,l',r')=(t+2,t,t+2,t)$}.
Setting $m=2$ in~\cref{eq:bar_reh_crep}, one obtains:
\begin{align*}
  \binom{2t+4}{t+2}^2\oname{Re}H(\theta) &= q_{t,2;0}C_{2t+4,t+2}(\theta) - q_{t,2;1}C''_{2t+4,t+2}(\theta) \\
  &= q_{t,2;0}C_{2t+4,t+2}(\theta)-q_{t,2;1}\left[(2t+4)^2C_{2t+3,t+1}(\theta)-(t+2)^2C_{2t+4,t+2}(\theta)\right]\;,
\end{align*}
where~\cref{lm:bar_reh_cfun_der} is used in the last equality.
Recall $P_{t,2}(x) = (x+t+1)(x+t+2)$, one has:
\begin{displaymath}
  \frac{Q_{t,2}(x)}{x} = \frac{(x+t+1)^2(x+t+2)^2-(-x+t+1)^2(-x+t+2)^2}{x} = 4(2t+3)[x^2+(t+1)(t+2)]\;,
\end{displaymath}
and thus:
\begin{displaymath}
  q_{t,2;0} = 4\zeta_0^{t+2,t} = \frac{4(2t+3)}{(t+1)(t+2)}\;,\quad
  q_{t,2;1} = \frac{4(2t+3)}{(t+1)^2(t+2)^2} = \frac{q_{t,2;0}}{(t+1)(t+2)}\;.
\end{displaymath}
Using~\cref{eq:bar_reh_cfun_pi}, we obtain:
\begin{displaymath}
  \frac{\binom{2t+4}{t+2}^2\oname{Re}H(\pi)}{q_{t,2;0}} = \binom{2t+4}{t+2}-\frac{1}{(t+1)(t+2)}\left[(2t+4)^2\binom{2t+3}{t+1}-(t+2)^2\binom{2t+4}{t+2}\right] = -\frac{\binom{2t+4}{t+2}}{t+1} < 0\;.
\end{displaymath}
Hence $\oname{Re}H(\pi)<0$ and by~\cref{lm:prelim_stab}, the scheme is unstable. 

\medskip

\noindent
{\it 2. $L-R=6$ and $(l,r,l',r')=(t+3,t,t+3,t)$}.
Similar as before, setting $m=3$ in~\cref{eq:bar_reh_crep} gives:
\begin{displaymath}
  \binom{2t+6}{t+3}^2\oname{Re}H(\theta) = q_{t,3;0}C_{2t+6,t+3}(\theta) - q_{t,3;1}C''_{2t+6,t+3}(\theta) + q_{t,3;2}C''''_{2t+6,t+3}(\theta)\;.
\end{displaymath}
For the $q$-coefficients, using $P_{t,3}(x)=(x+t+1)(x+t+2)(x+t+3)$ one gets:
\begin{displaymath}
  q_{t,3;0} = 4\zeta_0^{t+3,t}\;,\quad
  q_{t,3;1} = \frac{4+12(t+2)\zeta_0^{t+3,t}}{(t+1)(t+2)(t+3)}\;,\quad
  q_{t,3;2} = \frac{12}{(t+1)^2(t+2)(t+3)^2}\;;
\end{displaymath}
and for the $C$-functions, using~\cref{eq:bar_reh_cfun_pi} and~\cref{lm:bar_reh_cfun_der} there is:
\begin{align*}
  &C_{2t+6,t+3}(\pi) = \binom{2t+6}{t+3}\;,\quad
  C''_{2t+6,t+3}(\pi) = (2t+6)^2\binom{2t+5}{t+2}-(t+3)^2\binom{2t+6}{t+3}\;,\\
  &C''''_{2t+6,t+3}(\pi) = (2t+6)^2(2t+5)^2\binom{2t+4}{t+1}-(2t+6)^2[2t^2+10t+13]\binom{2t+5}{t+2}+(t+3)^4\binom{2t+6}{t+3}\;.
\end{align*}
Putting everything together, we obtain:
\begin{displaymath}
  \binom{2t+6}{t+3}^2\oname{Re}H(\pi) = -\frac{8}{t+1}\left(2+\frac{3}{t+1}+\frac{1}{t+2}+\frac{1}{t+3}\right)\binom{2t+6}{t+3} < 0\;,
\end{displaymath}
which indicates the instability of the method.

\medskip

\noindent
{\it 3. $L-R=4$ and $(l,r,l',r')=(t+3,t+1,t+2,t)$}.
Setting $m=2$ in~\cref{eq:bar_reh_crep_check} gives:
\begin{align*}
  \binom{2t+5}{t+2}^2\oname{Re}H(\theta) &= \check{q}_{t,2;0}C_{2t+5,t+2}(\theta) - \check{q}_{t,2;1}C''_{2t+5,t+2}(\theta) \\
  &= \check{q}_{t,2;0}C_{2t+5,t+2}(\theta)-\check{q}_{t,2;1}\left[(2t+5)^2C_{2t+4,t+1}(\theta)-(t+2)^2C_{2t+5,t+2}(\theta)\right]\;.
\end{align*}
Using $\check{Q}_{t,2}(x) = (x+t+1)(x+t+2)^2(x+t+3) - (t+1-x)(t+2-x)^2(t+3-x) = (t+1)(t+2)^2(t+3)x(\check{q}_{t,2;0}+\check{q}_{t,2;1}x^2)$ one computes:
\begin{displaymath}
  \check{q}_{t,2;0} = 2(\zeta_0^{t+3,t+1}+\zeta_0^{t+2,t}) = \frac{4(2t^2+8t+7)}{(t+1)(t+2)(t+3)}\;,\quad
  \check{q}_{t,2;1} = \frac{8}{(t+1)(t+2)(t+3)}\;.
\end{displaymath}
Thus by~\cref{eq:bar_reh_cfun_pi} we compute:
\begin{displaymath}
  \binom{2t+5}{t+2}^2\oname{Re}H(\pi) = -\frac{4(2t+5)}{(t+1)(t+2)(t+3)}\binom{2t+5}{t+2} < 0\;.
\end{displaymath}

\medskip

\noindent
{\it 4. $L-R=6$ and $(l,r,l',r')=(t+4,t+1,t+3,t)$}.
Setting $m=3$ in~\cref{eq:bar_reh_crep_check} gives:
\begin{align*}
  &\binom{2t+7}{t+3}^2\oname{Re}H(\pi) = \check{q}_{t,3;0}C_{2t+7,t+3}(\pi) - \check{q}_{t,3;1}C''_{2t+7,t+3}(\pi) + \check{q}_{t,3;2}C''''_{2t+7,t+3}(\pi) \\
  &= \left(\check{q}_{t,3;0}+(t+3)^2\check{q}_{t,3;1}+(t+3)^4\check{q}_{t,3;2}\right)\binom{2t+7}{t+3} - \\
  &\quad (2t+7)^2\left(\check{q}_{t,3;1}+(2t^2+10t+13)\check{q}_{t,3;2}\right)\binom{2t+6}{t+2} + (2t+6)^2(2t+7)^2\check{q}_{t,3;2}\binom{2t+5}{t+1}\;.
\end{align*}
Using $\check{Q}_{t,3}(x) = (x+t+1)(x+t+2)^2(x+t+3)^2(x+t+4) - (t+1-x)(t+2-x)^2(t+3-x)^2(t+4-x) = (t+1)(t+2)^2(t+3)^2(t+4)x(\check{q}_{t,3;0}+\check{q}_{t,3;1}x^2+\check{q}_{t,3;2}x^4)$ one computes:
\begin{align*}
  \check{q}_{t,3;0} &= 2(\zeta_0^{t+4,t+1}+\zeta_0^{t+3,t}) = \frac{2(2t+5)(3t^2+15t+14)}{(t+1)(t+2)(t+3)(t+4)}\;, \\
  \check{q}_{t,3;1} &= \frac{6(2t+5)(3t^2+15t+17)}{(t+1)(t+2)^2(t+3)^2(t+4)}\;,\quad  \check{q}_{t,3;2} = \frac{6(2t+5)}{(t+1)(t+2)^2(t+3)^2(t+4)}\;.
\end{align*}
Substituting them into the previous equation gives:
\begin{displaymath}
  \binom{2t+7}{t+3}^2\oname{Re}H(\pi) = -\frac{2(2t+5)(3t^2+27t+58)}{(t+1)(t+2)(t+3)(t+4)}\binom{2t+7}{t+3} < 0\;.
\end{displaymath}

\medskip

\noindent
{\it 5. $L-R=5$ and $(l,r,l',r')=(t+3,t,t+2,t)$}.
Setting $m=2$ in~\cref{eq:bar_reh_crep_hat} gives:
\begin{align*}
  &\binom{2t+5}{t+2}^2\oname{Re}H(\pi) = \hat{q}_{t,2;0}C_{2t+5,t+2}(\pi) - \hat{q}_{t,2;1}C''_{2t+5,t+2}(\pi) + \hat{q}_{t,2;2}C''''_{2t+5,t+2}(\pi) \\
  &= \left(\hat{q}_{t,2;0}+(t+2)^2\hat{q}_{t,2;1}+(t+2)^4\hat{q}_{t,2;2}\right)\binom{2t+5}{t+2} - \\
  &\quad (2t+5)^2\left(\hat{q}_{t,2;1}+(2t^2+6t+5)\hat{q}_{t,2;2}\right)\binom{2t+4}{t+1} + (2t+4)^2(2t+5)^2\hat{q}_{t,2;2}\binom{2t+3}{t}\;.
\end{align*}
Using $\hat{Q}_{t,2}(x) = (x+t+1)^2(x+t+2)^2(x+t+3)-(t+1-x)^2(t+2-x)^2(t+3-x) = (t+1)^2(t+2)^2(t+3)(\hat{q}_{t,2;0}+\hat{q}_{t,2;1}x^2+\hat{q}_{t,2;2}x^4)$, there is:
\begin{align*}
  \hat{q}_{t,2;0} &= 2(\zeta_0^{t+3,t}+\zeta_0^{t+2,t}) = \frac{2(5t^2+21t+20)}{(t+1)(t+2)(t+3)}\;, \\
  \hat{q}_{t,2;1} &= \frac{2(10t^2+36t+31)}{(t+1)^2(t+2)^2(t+3)}\;,\quad
  \hat{q}_{t,2;2} = \frac{2}{(t+1)^2(t+2)^2(t+3)}\;.
\end{align*}
It follows that:
\begin{displaymath}
  \binom{2t+5}{t+2}^2\oname{Re}H(\pi) = -\frac{4(2t+5)(t^2+6t+7)}{(t+1)^2(t+2)(t+3)}\binom{2t+5}{t+2} < 0\;.
\end{displaymath}

\medskip

\noindent
{\it 6. $L-R=7$ and $(l,r,l',r')=(t+4,t,t+3,t)$}.
Setting $m=3$ in~\cref{eq:bar_reh_crep_hat} gives:
\begin{align*}
  &\binom{2t+7}{t+3}^2\oname{Re}H(\pi) = \hat{q}_{t,3;0}C_{2t+7,t+3}(\pi) - \hat{q}_{t,3;1}C''_{2t+7,t+3}(\pi) + \hat{q}_{t,3;2}C''''_{2t+7,t+3}(\pi) - \hat{q}_{t,3;3}C^{''''''}_{2t+7,t+3}(\pi) \\
  &= \left(\hat{q}_{t,3;0}+(t+3)^2\hat{q}_{t,3;1}+(t+3)^4\hat{q}_{t,3;2}+(t+3)^6\right)\binom{2t+7}{t+3} - \\
  &\quad (2t+7)^2\left(\hat{q}_{t,3;1}+(2t^2+10t+13)\hat{q}_{t,3;2}+(3t^4+30t^3+115t^2+200t+133)\hat{q}_{t,3;3}\right)\binom{2t+6}{t+2} + \\
  &\quad (2t+6)^2(2t+7)^2(\hat{q}_{t,3;2}+(3t^2+12t+14)\hat{q}_{t,3;3})\binom{2t+5}{t+1} - (2t+5)^2(2t+6)^2(2t+7)^2\hat{q}_{t,3;3}\binom{2t+4}{t}\;.
\end{align*}
Using $\hat{Q}_{t,3}(x) = (x+t+1)^2(x+t+2)^2(x+t+3)^2(x+t+4)-(t+1-x)^2(t+2-x)^2(t+3-x)^2(t+4-x) = (t+1)^2(t+2)^2(t+3)^2(t+4)(\hat{q}_{t,3;0}+\hat{q}_{t,3;1}x^2+\hat{q}_{t,3;2}x^4+\hat{q}_{t,3;3}x^6)$ one has:
\begin{align*}
  \hat{q}_{t,3;0} &= 2(\zeta_0^{t+4,t}+\zeta_0^{t+3,t}) = \frac{2(7t^3+54t^2+129t+94)}{(t+1)(t+2)(t+3)(t+4)}\;, \\
  \hat{q}_{t,3;1} &= \frac{2(35t^4+320t^3+1060t^2+1504t+769)}{(t+1)^2(t+2)^2(t+3)^2(t+4)}\;,\quad
  \hat{q}_{t,3;2} = \frac{2(21t^2+96t+106)}{(t+1)^2(t+2)^2(t+3)^2(t+4)}\;, \\
  \hat{q}_{t,3;3} &= \frac{2}{(t+1)^2(t+2)^2(t+3)^2(t+4)}\;.
\end{align*}
And finally:
\begin{displaymath}
  \binom{2t+7}{t+3}^2\oname{Re}H(\pi) = -\frac{4(2t+5)^2(2t+7)}{(t+1)(t+2)(t+3)(t+4)}\binom{2t+7}{t+3} < 0\;.
\end{displaymath}

\medskip

\noindent
{\it 7. $L-R=5$ and $(l,r,l',r')=(t+4,t+1,t+2,t)$}.
Setting $m=2$ in~\cref{eq:bar_reh_crep_tilde} gives:
\begin{align*}
  &\binom{2t+6}{t+2}^2\oname{Re}H(\pi) = \tilde{q}_{t,2;0}C_{2t+6,t+2}(\pi) - \tilde{q}_{t,2;1}C''_{2t+6,t+2}(\pi) + \tilde{q}_{t,2;2}C''''_{2t+6,t+2}(\pi) \\
  &= \left(\tilde{q}_{t,2;0}+(t+2)^2\tilde{q}_{t,2;1}+(t+2)^4\tilde{q}_{t,2;2}\right)\binom{2t+6}{t+2} - \\
  &\quad (2t+6)^2\left(\tilde{q}_{t,2;1}+(2t^2+6t+5)\tilde{q}_{t,2;2}\right)\binom{2t+5}{t+1} + (2t+5)^2(2t+6)^2\tilde{q}_{t,2;2}\binom{2t+4}{t}\;.
\end{align*}
Using $\tilde{Q}_{t,2}(x) = (x+t+1)(x+t+2)^2(x+t+3)(x+t+4)-(t+1-x)(t+2-x)^2(t+3-x)(t+4-x) = (t+1)(t+2)^2(t+3)(t+4)(\tilde{q}_{t,2;0}+\tilde{q}_{t,2;1}x^2+\tilde{q}_{t,2;2}x^4)$ one has:
\begin{align*}
  \tilde{q}_{t,2;0} &= 2(\zeta_0^{t+4,t+1}+\zeta_0^{t+2,t}) = \frac{2(5t^3+38t^2+89t+62)}{(t+1)(t+2)(t+3)(t+4)}\;, \\
  \tilde{q}_{t,2;1} &= \frac{2(10t^2+48t+55)}{(t+1)(t+2)^2(t+3)(t+4)}\;,\quad
  \tilde{q}_{t,2;2} = \frac{2}{(t+1)(t+2)^2(t+3)(t+4)}\;.
\end{align*}
Lastly we obtain:
\begin{displaymath}
  \binom{2t+6}{t+2}^2\oname{Re}H(\pi) = -\frac{4(2t+5)^2}{(t+1)(t+2)(t+4)}\binom{2t+6}{t+2} < 0\;.
\end{displaymath}

\medskip

\noindent
{\it 8. $L-R=7$ and $(l,r,l',r')=(t+5,t+1,t+3,t)$}.
Setting $m=3$ in~\cref{eq:bar_reh_crep_tilde} gives:
\begin{align*}
  &\binom{2t+8}{t+3}^2\oname{Re}H(\pi) = \tilde{q}_{t,3;0}C_{2t+8,t+3}(\pi) - \tilde{q}_{t,3;1}C''_{2t+8,t+3}(\pi) + \tilde{q}_{t,3;2}C''''_{2t+8,t+3}(\pi) - \tilde{q}_{t,3;3}C^{''''''}_{2t+8,t+3}(\pi) \\
  &= \left(\tilde{q}_{t,3;0}+(t+3)^2\tilde{q}_{t,3;1}+(t+3)^4\tilde{q}_{t,3;2}+(t+3)^6\right)\binom{2t+8}{t+3} - \\
  &\quad (2t+8)^2\left(\tilde{q}_{t,3;1}+(2t^2+10t+13)\tilde{q}_{t,3;2}+(3t^4+30t^3+115t^2+200t+133)\tilde{q}_{t,3;3}\right)\binom{2t+7}{t+2} + \\
  &\quad (2t+7)^2(2t+8)^2(\tilde{q}_{t,3;2}+(3t^2+12t+14)\tilde{q}_{t,3;3})\binom{2t+6}{t+1} - (2t+6)^2(2t+7)^2(2t+8)^2\tilde{q}_{t,3;3}\binom{2t+5}{t}\;.
\end{align*}
Using $\tilde{Q}_{t,3}(x) = (x+t+1)(x+t+2)^2(x+t+3)^2(x+t+4)(x+t+5)-(t+1-x)(t+2-x)^2(t+3-x)^2(t+4-x)(t+5-x) = (t+1)(t+2)^2(t+3)^2(t+4)(t+5)(\tilde{q}_{t,3;0}+\tilde{q}_{t,3;1}x^2+\tilde{q}_{t,3;2}x^4+\tilde{q}_{t,3;3}x^6)$:
\begin{align*}
  \tilde{q}_{t,3;0} &= 2(\zeta_0^{t+5,t+1}+\zeta_0^{t+3,t}) = \frac{2(7t^4+85t^3+363t^2+635t+374)}{(t+1)(t+2)(t+3)(t+4)(t+5)}\;, \\
  \tilde{q}_{t,3;1} &= \frac{2(35t^4+400t^3+1660t^2+2960t+1909)}{(t+1)(t+2)^2(t+3)^2(t+4)(t+5)}\;,\quad
  \tilde{q}_{t,3;2} = \frac{2(21t^2+120t+166)}{(t+1)(t+2)^2(t+3)^2(t+4)(t+5)}\;, \\
  \tilde{q}_{t,3;3} &= \frac{2}{(t+1)(t+2)^2(t+3)^2(t+4)(t+5)}\;.
\end{align*}
Lastly we compute:
\begin{displaymath}
  \binom{2t+8}{t+3}^2\oname{Re}H(\pi) = -\frac{8(2t+7)(t^2+8t+13)}{(t+1)(t+2)(t+3)(t+5)}\binom{2t+8}{t+3} < 0\;.
\end{displaymath}

\medskip

\noindent
Summarizing all cases, we complete the proof of~\cref{thm:bar_pi}.

\subsection{Proof of~\cref{thm:bar_asymp}}
\label{sec:bar_asymp}
Let $(l,r,l',r')=(t+m,t,t+m,t)$ where $m\ge1$, then $\oname{Re}H$ is given by~\cref{eq:bar_reh_crep}.
As we're interested in $\oname{Re}H(\pi)$ at fixed $m$ as $t\to\infty$, we shall expand each term of~\cref{eq:bar_reh_crep} in $1/t$.
Particularly, we shall need such estimates for $q_{t,m;j}$, $0\le j\le m-1$ and $\frac{d^{2j}}{d\theta^{2j}}C_{2t+2m,t+m}(\pi)$.
For notational simplicity, we denote $O(1/t)$ by $\varepsilon$.

\medskip

First we estimate coefficients $q_{t,m;j}$, defined for each $0\le j\le m-1$:
\begin{align*}
  q_{t,m;j} &= \left(\frac{t!}{(t+m)!}\right)^2[Q_{t,m}(x)]_{2j+1} = \left(\frac{t!}{(t+m)!}\right)^2[P_{t,m}^2(x)-P_{t,m}^2(-x)]_{2j+1} \\
  &= 2\left(\frac{t!}{(t+m)!}\right)^2\sum_{k=\max(0,2j+1-m)}^{\min(m,2j+1)}[P_{t,m}]_k[P_{t,m}]_{2j+1-k}\;.
\end{align*}
Because $P_{t,m}(x) = (x+t+1)(x+t+2)\cdots(x+t+m)$, there is:
\begin{displaymath}
  [P_{t,m}]_k = \frac{(t+m)!}{t!}\left[\binom{m}{k}\frac{1}{t^k}+O(\frac{1}{t^{k+1}})\right] = \binom{m}{k}t^{m-k}(1+\oone)\;,\ 0\le k\le m\;;
\end{displaymath}
therefore:
\begin{displaymath}
  [P_{t,m}]_k[P_{t,m}]_{2j+1-k} = \binom{m}{k}\binom{m}{2j+1-k}t^{2m-2j-1}(1+\oone)\;.
\end{displaymath}
Hence we obtain an estimate for $q_{t,m;j}$ as:
\begin{equation}\label{eq:bar_asymp_q}
  q_{t,m;j} = 2\left(\frac{t!}{(t+m)!}\right)^2\left[\sum_k\binom{m}{k}\binom{m}{2j+1-k}\right]t^{2m-2j-1}(1+\oone)
  = 2\binom{2m}{2j+1}t^{-2j-1}(1+\oone)\;.
\end{equation}

\medskip

Next we consider $\frac{d^{2j}}{d\theta^{2j}}C_{2t+2m,t+m}(\theta)$.
To this end, let us define the index set:
\begin{equation}\label{eq:bar_asymp_d}
  D_{p,q}\eqdef\{(d_1,\cdots,d_q):\,0\le d_1\le\cdots\le d_q\le p\}\;,
\end{equation}
and the following $Z$-functions:
\begin{equation}\label{eq:bar_asymp_z_def}
  Z_{p,q}(x) = \sum_{D_{p,q}}(x-d_1)^2(x-d_2)^2\cdots(x-d_q)^2\;.
\end{equation}
In what follows we write $n=t+m$ for simplicity (so $C_{2t+2m,t+m}(\theta)$ is written $C_{2n,n}(\theta)$) and because all $Z_{p,q}(\bullet)$ will take argument $n$, we shall just write $Z_{p,q}=Z_{p,q}(n)$.
By convention, we denote $Z_{p,0}=1$ for $p\ge0$ and $Z_{p,q}=0$ if $p<0$ or $q<0$.
A key step is to use induction to show that for $0\le j\le n$:
\begin{equation}\label{eq:bar_asymp_cder}
  \frac{d^{2j}C_{2n,n}}{d\theta^{2j}} = \sum_{k=0}^j(-1)^k\left(\frac{(2n)!}{(2n-j+k)!}\right)^2Z_{j-k,k}C_{2n-j+k,n-j+k}\;.
\end{equation}
When $j=0$, \cref{eq:bar_asymp_cder} reduces to:
\begin{displaymath}
  C_{2n,n} = Z_{0,0}C_{2n,n}\;,
\end{displaymath}
which is true for all $n$.
Now suppose~\cref{eq:bar_asymp_cder} holds for $j$, where $0\le j\le n-1$, we then consider the case with $j+1$.
By~\cref{lm:bar_reh_cfun_der} and the inductive assumption, there is:
\begin{align*}
  \frac{d^{2j+2}C_{2n,n}}{d\theta^{2j+2}} =&\ \frac{d^2}{d\theta^2}\left[\sum_{k=0}^j(-1)^k\left(\frac{(2n)!}{(2n-j+k)!}\right)^2Z_{j-k,k}C_{2n-j+k,n-j+k}\right] \\
  =&\ \sum_{k=0}^j(-1)^k\left(\frac{(2n)!}{(2n-j+k)!}\right)^2Z_{j-k,k}\left[(2n-j+k)^2C_{2n-j-1+k,n-j-1+k}-(n-j+k)^2C_{2n-j+k,n-j+k}\right] \\
  =&\ \sum_{k=0}^j(-1)^k\left(\frac{(2n)!}{(2n-j-1+k)!}\right)^2Z_{j-k,k}C_{2n-j-1+k,n-j-1+k} + \\
  &\ \sum_{k=1}^{j+1}(-1)^k\left(\frac{(2n)!}{(2n-j-1+k)!}\right)^2(n-j-1+k)^2Z_{j+1-k,k-1}C_{2n-j-1+k,n-j-1+k} \\
  =&\ \sum_{k=0}^{j+1}(-1)^k\left(\frac{(2n)!}{(2n-j-1+k)!}\right)^2\left[Z_{j-k,k}+(n-j-1+k)^2Z_{j+1-k,k-1}\right]C_{2n-j-1+k,n-j-1+k}\;.
\end{align*}
To complete the induction, we need to show $Z_{j+1-k,k}=Z_{j-k,k}+(n-j-1+k)^2Z_{j+1-k,k-1}$, which is not difficult:
\begin{align*}
  Z_{j+1-k,k} =&\ \sum_{D_{j+1-k,k}}(n-d_1)^2\cdots(n-d_k)^2 \\
  =&\ \sum_{D_{j-k,k}}(n-d_1)^2\cdots(n-d_k)^2 + \sum_{D_{j+1-k,k-1}}(n-d_1)^2\cdots(n-d_{k-1})^2(n-(j+1-k))^2 \\
  =&\ Z_{j-k,k} + (n-j-1+k)^2Z_{j+1-k,k-1}\;.
\end{align*}
One last step before estimating $\oname{Re}H(\pi)$ is the following estimate for $Z_{j-k,k}$, $0\le k\le j\le m-1$, which are not difficult to obtain:
\begin{equation}\label{eq:bar_asymp_z}
  Z_{j-k,k} = \sum_{D_{j-k,k}}(t+m-d_1)^2\cdots(t+m-d_k)^2 = \binom{j}{k}t^{2k}(1+\oone)\;.
\end{equation}
At last, combining~\cref{eq:bar_reh_crep}, ~\cref{eq:bar_reh_cfun_pi} and~\cref{eq:bar_asymp_cder}, and using~\cref{eq:bar_asymp_q} and~\cref{eq:bar_asymp_z}, we have:
\begin{align*}
  \oname{Re}H(\pi) &= \binom{2t+2m}{t+m}^{-2}\sum_{j=0}^{m-1}(-1)^jq_{t,m;j}\sum_{k=0}^j(-1)^k\left(\frac{(2t+2m)!}{(2t+2m-j+k)!}\right)^2Z_{j-k,k}\binom{2t+2m-j+k}{t+m-j+k} \\
  &= \binom{2t+2m}{t+m}^{-2}\sum_{j=0}^{m-1}(-1)^jq_{t,m;j}\sum_{k=0}^j(-1)^{j-k}\left(\frac{(2t+2m)!}{(2t+2m-k)!}\right)^2Z_{k,j-k}\binom{2t+2m-k}{t+m-k} \\
  &= \binom{2t+2m}{t+m}^{-1}\sum_{k=0}^{m-1}\frac{(-1)^k(t+m)!(2t+2m)!}{(2t+2m-k)!(t+m-k)!}\sum_{j=k}^{m-1}q_{t,m;j}Z_{k,j-k} \\
  &= \binom{2t+2m}{t+m}^{-1}\sum_{k=0}^{m-1}\frac{(-1)^k2(t+m)!(2t+2m)!t^{-2k-1}(1+\oone)}{(2t+2m-k)!(t+m-k)!}\left[\sum_{j=k}^{m-1}\binom{2m}{2j+1}\binom{j}{j-k}\right]\;.
\end{align*}
For each $0\le k\le m-1$, we have:
\begin{displaymath}
  \frac{(t+m)!(2t+2m)!}{(2t+2m-k)!(t+m-k)!} = (2t)^kt^k(1+\oone) = 2^kt^{2k}(1+\oone)\;,
\end{displaymath}
and it follows that:
\begin{align*}
  \oname{Re}H(\pi) &= \binom{2t+2m}{t+m}^{-1}\sum_{k=0}^{m-1}(-1)^k2^{k+1}t^{-1}(1+\oone)\sum_{j=k}^{m-1}\binom{2m}{2j+1}\binom{j}{k} \\
  &= \frac{2}{t}\binom{2t+2m}{t+m}^{-1}(1+\varepsilon)\sum_{j=0}^{m-1}\binom{2m}{2j+1}\sum_{k=0}^j(-2)^k\binom{j}{k} \\
  &= \frac{2}{t}\binom{2t+2m}{t+m}^{-1}(1+\varepsilon)\sum_{j=0}^{m-1}(-1)^j\binom{2m}{2j+1}
   = \binom{2t+2m}{t+m}^{-1}(1+\varepsilon)\frac{2}{it}\sum_{j=0}^{m-1}\binom{2m}{2j+1}i^{2j+1} \\
  &= \binom{2t+2m}{t+m}^{-1}(1+\varepsilon)\frac{2}{it}\frac{(1+i)^{2m}-(1-i)^{2m}}{2}
   = \binom{2t+2m}{t+m}^{-1}\frac{2^{m+1}}{t}\sin\frac{m\pi}{2}(1+\varepsilon)\;.
\end{align*}
Here the carrying over of $(1+\varepsilon)$ along the equalities is justified by the fact that $m$ is fixed and all summations are independent of $t$; and the last equality indicates that when $t$ is large ($\varepsilon\ll1$), the sign of $\oname{Re}H(\pi)$ is the same as $\sin\frac{m\pi}{2}$ given that $m$ is odd, which proves~\cref{thm:bar_asymp}.

\section{Relation to other Hermite type methods}
\label{sec:hermite}
The hybrird-variable discretization builds on Hermite interpolations.
To see the relation, let us consider in this section a stencil such that $l=l'$ and $r=r'$ (so $(l,r,l',r')=(l,r,l,r)$).
Let $U(x) = \int_0^xu(y)dy$ be an anti-derivative of $u(x)$, then one can interprete $\overline{u}_{\phf{j}}$ and $u_j$ as approximations to nodal value and derivative of $U$:
\begin{displaymath}
  \overline{u}_{\phf{j}} = \frac{1}{h}(U(x_{j+1})-U(x_j))\;,\quad
  u_j = U_x(x_j)\;.
\end{displaymath}
Using the primitives,~\cref{eq:prelim_ddo} can be written as:
\begin{displaymath}
  [\mathcal{D}_xu]_j = \frac{1}{h^2}\sum_{k=-l}^{r-1}\alpha_k(U_{j+k+1}-U_{j+k}) + \frac{1}{h}\sum_{k=-l}^r\beta_kDU_{j+k}\;,
\end{displaymath}
where $U_j\approx U(x_j)$ and $DU_j\approx U_x(x_j)$; and $[\mathcal{D}_xu]$ is $p$-th order if and only if for sufficiently smooth $U(x)$:
\begin{equation}\label{eq:hermite_ddo}
  U_{xx}(x_j) = u_x(x_j) = \frac{1}{h^2}\sum_{k=-l}^r\Delta\alpha_k U(x_{j+k}) + \frac{1}{h}\sum_{k=-l}^r\beta_k U'(x_{j+k}) + O(h^{p})\;,
\end{equation}
where $\Delta\alpha_k=\alpha_{k-1}-\alpha_k$, see also~\cref{eq:app_rev_err_hv}.
It follows that $\Delta\alpha_k$ and $\beta_k$ are coefficients of the balanced Hermite interpolation using nodal value and first-derivative at points $x_{j+k}$, $-l\le k\le r$.
Other stencils can be converted to an unbalanced Hermite interpolation problem, where at one of or both of the two end points only nodal value but no derivative is involved in the interpolation.
To this end, we expect the analysis developed in this paper applies to other Hermite-type method and will demonstrate this point to study a discretization builds on both nodal values and nodal derivatives.

In preparation, let use first write out the formula to compute $\Delta\alpha_k$ and $\beta_k$ using Hermite interpolation polynomials corresponding to the points $\mathcal{X}\eqdef\{kh:\,-l\le k\le r\}$.
The Lagrange interpolation polynomials for $\mathcal{X}$ are given by $\{l_k(x):\,-l\le k\le r\}$:
\begin{equation}\label{eq:hermite_lag}
  l_k(x) = \frac{L(x)}{L_k(x)}\;,\ \textrm{ where }\ 
  L(x)=\prod_{-l\le \nu\le r}(x-\nu h)\ \textrm{ and }\ L_k(x)=\frac{L(x)}{x-kh}\;.
\end{equation}
It is easy to check for all polynomials $p(x)$ of degree no more than $2l+2r+1$:
\begin{equation}\label{eq:hermite_interp}
  p(x) = \sum_{-l\le k\le r}p(x_k)h_k(x) + \sum_{-l\le k\le r}p'(x_k)g_k(x)\;,
\end{equation}
where $\{h_k(x):\,-l\le k\le r\}$ are the fundamental polynomial of the first kind defined as:
\begin{equation}\label{eq:hermite_first}
  h_k(x) = \left[1-2l_k'(kh)(x-kh)\right]l^2_k(x)\;,
\end{equation}
and $\{g_k(x):\,-l\le k\le r\}$ are the fundamental polynomial of the second kind given by:
\begin{equation}\label{eq:hermite_second}
  g_k(x) = (x-kh)l^2_k(x)\;.
\end{equation}
Setting $p(x) = Q(x-x_j)$ in~\cref{eq:hermite_interp} and comparing with~\cref{eq:hermite_ddo}, one obtains\footnote{This proves the formula in~\cref{lm:prelim_coef} in the case $l=l'$ and $r=r'$.}:
\begin{equation}\label{eq:hermite_hv_coef}
  \Delta\alpha_k = h^2h_k''(0)\;,\quad
  \beta_k = hg''_k(0)\;,\quad-l\le k\le r\;.
\end{equation}

Now let us consider a conservative discretization that has been utilized in constructing the optimally accurate baseline scheme in Hermite-WENO methods~\cite{GCapdeville:2008a,HLiu:2015a,XLi:2020a}.
Following this strategy, to solve the Cauchy problem~\cref{eq:prelim_adv} one seeks numerical solutions to cell averages of both the solution function $u$ and its spatial derivative $v=u_x$.
The semi-discretized variables are thus denoted:
\begin{equation}\label{eq:hermite_weno_sol}
  u_j(t) \approx \frac{1}{h}\int_{x_{\mhf{j}}}^{\phf{j}}u(x,t)dx\;,\quad
  v_j(t) \approx \frac{1}{h}\int_{x_{\mhf{j}}}^{\phf{j}}u_x(x,t)dx = u(x_{\phf{j}},t)-u(x_{\mhf{j}},t)\;.
\end{equation}
Note that we adopt the notaion in the HWENO literature, which is slightly different from what we used before: the cell faces are at half-grid points and the overline is omitted for simplity as all quantities are cell-averaged.
Conservative discretization of the governing equations $u_t+u_x=0$ and $v_t+v_x = (u_t+u_x)_x = 0$ is given by:
\begin{equation}\label{eq:hermite_weno_semi}
  \left\{\begin{array}{l}
    u_j' + \frac{1}{h}\left(\mathcal{F}_{\phf{j}}-\mathcal{F}_{\mhf{j}}\right) = 0\;, \\ \vspace*{-.1in} \\
    v_j' + \frac{1}{h}\left(\mathcal{H}_{\phf{j}}-\mathcal{H}_{\mhf{j}}\right) = 0\;.
  \end{array}\right.
\end{equation}
Here $\mathcal{F}_{\phf{j}}$ and $\mathcal{H}_{\phf{j}}$ are numerical fluxes given in the general form:
\begin{equation}\label{eq:hermite_weno_genflux}
  \left\{\begin{array}{l}
    \mathcal{F}_{\phf{j}} = \mathcal{F}(u_{j-l+1},u_{j-l+2},\cdots,u_{j+r},v_{j-l+1},v_{j-l+2},\cdots,v_{j+r})\;, \\ \vspace*{-.1in} \\
    \mathcal{H}_{\phf{j}} = \mathcal{H}(u_{j-l+1},u_{j-l+2},\cdots,u_{j+r},v_{j-l+1},v_{j-l+2},\cdots,v_{j+r})\;.
  \end{array}\right.
\end{equation}
Here $l$ and $r$ designate the stencil of the method, and optimally accurate fluxes are computed by finding a polynomial $P(x)$ of degree no more than $2l+2r-1$, such that:
\begin{equation}\label{eq:hermite_weno_kexact}
  \frac{1}{h}\int_{x_{\mhf{j+k}}}^{x_{\phf{j+k}}}P(x)dx = u_{j+k}\;,\quad
  \frac{1}{h}\int_{x_{\mhf{j+k}}}^{x_{\phf{j+k}}}P'(x)dx = v_{j+k}\;,\quad -l+1\le k\le r\;, 
\end{equation}
then one computes:
\begin{equation}\label{eq:hermite_weno_flux}
  \mathcal{F}_{\phf{j}} = P(x_{\phf{j}})\;,\quad
  \mathcal{H}_{\phf{j}} = P'(x_{\phf{j}})\;.
\end{equation}

To derive explicit formula of $\mathcal{F}$ and $\mathcal{H}$ and conduct stability analysis, let us define again the anti-derivative $Q(x)=\int_{x_\phf{j}}^xP(y)dy$, then the k-exactness condition~\cref{eq:hermite_weno_kexact} gives:
\begin{equation}\label{eq:hermite_weno_qval}
  Q(x_{\phf{j+k}}) = hU_k^j,\quad 
  Q'(x_{\phf{j+k}}) = hC+hV_k^j,\quad
  -l\le k\le r\;,
\end{equation}
where $C$ is a constant designating $P(x_{\phf{j}})/h$ and:
\begin{displaymath}
  U_k^j \eqdef \left\{\begin{array}{lcl}
    \sum_{\nu=1}^ku_{j+\nu}\;, & & 1\le k\le r\;, \\
    0\;, & & k = 0\;, \\
    -\sum_{\nu=1}^{-k}u_{j-\nu+1}\;, & & -l\le k\le -1\;.
  \end{array}\right.\;,\quad
  V_k^j \eqdef \left\{\begin{array}{lcl}
    \sum_{\nu=1}^kv_{j+\nu}\;, & & 1\le k\le r\;, \\
    0\;, & & k = 0\;, \\
    -\sum_{\nu=1}^{-k}v_{j-\nu+1}\;, & & -l\le k\le -1\;.
  \end{array}\right.
\end{displaymath}
On the one hand, by the Hermite interpolation theory there is a unique $Q$ of degree no more than $2l+2r+1$, such that~\cref{eq:hermite_weno_qval} are satisfied; and it is given by:
\begin{equation}\label{eq:hermite_weno_q}
  Q(x) = h\sum_{k=-l}^rU_k^jh_k(x-x_{\phf{j}})+h\sum_{k=-l}^r(C+V_k^j)g_k(x-x_{\phf{j}})\;;
\end{equation}
on the other hand, $Q$ is the anti-derivative of $P$ and thus has degree no more than $2l+2r$, it follows that $C$ must be the value which makes the $x^{2l+2r+1}$-coefficient zero, or:
\begin{displaymath}
  \sum_{k=-l}^r\frac{-2l'_k(kh)}{L^2_k(kh)}U_k^j + \sum_{k=-l}^r\frac{1}{L_k^2(kh)}(V_k^j+C) = 0\ \Rightarrow\ 
  C = -\frac{\sum_{\nu=-l}^r[V_\nu^j-2l'_\nu(\nu h)U_\nu^j]/L^2_\nu(\nu h)}{\sum_{\nu=-l}^r1/L_\nu^2(\nu h)}\;.
\end{displaymath}
Substituting $C$ in~\cref{eq:hermite_weno_q}, one computes the fluxes $\mathcal{F}_{\phf{j}}=Q'(x_{\phf{j}})$ and $\mathcal{H}_{\phf{j}}=Q''(x_{\phf{j}})$:
\begin{align*}
  \frac{\mathcal{F}_{\phf{j}}}{h} &= \sum_{k=-l}^rU_k^j\left[h'_k(0)+\frac{2l'_k(kh)\sum_{\nu=-l}^rh'_\nu(0)}{\sum_{\nu=-l}^rL^2_k(kh)/L^2_\nu(\nu h)}\right] + \sum_{k=-l}^rV_k^j\left[g'_k(0)-\frac{\sum_{\nu=-l}^rg'_\nu(0)}{\sum_{\nu=-l}^rL^2_k(kh)/L^2_\nu(\nu h)}\right] \\
  &= \sum_{k=-l}^rU_k^j\frac{2l'_k(kh)}{\sum_{\nu=-l}^rL^2_k(kh)/L^2_\nu(\nu h)} - \sum_{k=-l}^rV_k^j\frac{1}{\sum_{\nu=-l}^rL^2_k(kh)/L^2_\nu(\nu h)}\;, \\
  \frac{\mathcal{H}_{\phf{j}}}{h} &= \sum_{k=-l}^rU_k^j\left[h''_k(0)+\frac{2l'_k(kh)\sum_{\nu=-l}^rg''_\nu(0)}{\sum_{\nu=-l}^rL^2_k(kh)/L^2_\nu(\nu h)}\right] + \sum_{k=-l}^rV_k^j\left[g''_k(0)-\frac{\sum_{\nu=-l}^rg''_\nu(0)}{\sum_{\nu=-l}^rL^2_k(kh)/L^2_\nu(\nu h)}\right]\;. 
\end{align*}
Here in the calculation of $\mathcal{F}_{\phf{j}}$ we used the interpolation properties of $h_k$ and $g_k$ as well as $V_0^j=0$.
Using $U_k^j-U_k^{j-1}=u_{j+k}-u_j$ and $V_k^j-V_k^{j-1}=v_{j+k}-v_j$, we obtain the update equations for $u_j(t)$ and $v_j(t)$:
\begin{align*}
  u'_j &= -\frac{\mathcal{F}_{\phf{j}}-\mathcal{F}_{\mhf{j}}}{h} = -\sum_{k=-l}^r2c_kl'_k(kh)(u_{j+k}-u_j)+\sum_{k=-l}^rc_k(v_{j+k}-v_j)\;, \\
  v'_j &= -\frac{\mathcal{H}_{\phf{j}}-\mathcal{H}_{\mhf{j}}}{h} = -\sum_{k=-l}^r\frac{\Delta\alpha_k+2c_khl'_k(kh)b_0}{h^2}(w_{j+k}-w_j)-\sum_{k=-l}^r\frac{\beta_k-a_kb_0}{h}(v_{j+k}-v_j)\;.
\end{align*}
Here we used~\cref{eq:hermite_hv_coef}, denoted $b_0=\sum_{k=-l}^r\beta_k$, and defined the following constants:
\begin{displaymath}
  c_k = \frac{1/L^2_k(kh)}{\sum_{\nu=-l}^r1/L^2_\nu(\nu h)} 
  = \frac{\binom{l+r}{l+k}^2}{\sum_{\nu=-l}^r\binom{l+r}{l+\nu}\binom{l+r}{r-\nu}}
  = \binom{2l+2r}{l+r}^{-2}\binom{l+r}{l+k}^2\;,\quad-l\le k\le r\;.
\end{displaymath}
Furthermore, it is not difficult to compute that $hl'_k(kh) = \zeta_k^{l,r}$.

Following the von Neumann stability analysis, we make the ansatz $u_j=U(t)e^{i\kappa x_j}=U(t)e^{ij\theta}$ and $v_j=i\kappa V(t)e^{i\kappa x_j} = i\kappa V(t)e^{ij\theta}$, the ODE system reduces to:
\begin{displaymath}
  \begin{bmatrix}
    U' \\ V'
  \end{bmatrix} = -\frac{1}{h}
  \begin{bmatrix}
    \sum_k2c_k\zeta_k^{l,r}(e^{ik\theta}-1) & -i\theta\sum_kc_k(e^{ik\theta}-1) \\
    \sum_k[\Delta\alpha_k+2c_k\zeta_k^{l,r}b_0]\frac{e^{ik\theta}-1}{i\theta} & \sum_k[\beta_k-c_kb_0](e^{ik\theta}-1)
  \end{bmatrix}
  \begin{bmatrix}
    U \\ V
  \end{bmatrix}\;,
\end{displaymath}
where all summations are taken over $-l\le k\le r$.
Therefore, a necessary condition for the method to be stable is $\oname{Re}\tilde{H}(\theta)\ge0$ for all $0\le\theta\le2\pi$, where:
\begin{equation}\label{eq:hermite_trace}
  \tilde{H}(\theta) = \sum_{k=-l}^r2c_k\zeta_k^{l,r}(e^{ik\theta}-1)+\sum_{k=-l}^r\left[\beta_k-c_kb_0\right](e^{ik\theta}-1)\;.
\end{equation}

To illustrate the linkage of stability analyis of this method and previous results, we make use of the Item 2 in the proof of~\cref{thm:bar_pi}, given in~\cref{sec:bar_pi}, to prove the following instability result.
\begin{theorem}\label{thm:hermite_weno_instab}
  The semi-discretized method~\cref{eq:hermite_weno_semi} with optimally accurate fluxes and stencil $(l,r)=(t+3,t)$ is unstable for all $t\ge0$.
\end{theorem}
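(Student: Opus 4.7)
The strategy is to exhibit a single frequency $\theta_0\in(0,2\pi)$ at which $\oname{Re}\tilde{H}(\theta_0)<0$, thereby violating the necessary stability condition derived above. Following the pattern of Item 2 of the proof of~\cref{thm:bar_pi}, I take $\theta_0=\pi$ and import the information already computed there for the hybrid-variable scheme on the matching stencil $(l,r,l',r')=(t+3,t,t+3,t)$. This is legitimate because the $\beta_k$ coefficients produced by~\cref{eq:hermite_hv_coef} are the same as those appearing in~\cref{eq:hermite_trace} whenever $l=l'$ and $r=r'$, so the HV function $H(\theta)$ and its evaluation at $\pi$ are available to us without further work.

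At $\theta=\pi$ we have $e^{ik\pi}-1=-2$ when $k$ is odd and $0$ otherwise, so~\cref{eq:hermite_trace} collapses to a sum over odd indices. Using the elementary identity $\sum_k\beta_k(-1)^k=b_0-2\sum_{k\text{ odd}}\beta_k$ to rewrite the $\beta$-subsum in terms of $\oname{Re}H(\pi)=\sum_k\beta_k(-1)^k$, I arrive at the three-term decomposition
$$\oname{Re}\tilde{H}(\pi)=\oname{Re}H(\pi)-b_0(1-2S_1)-4S_2,$$
where $S_1\eqdef\sum_{k\text{ odd}}c_k$ and $S_2\eqdef\sum_{k\text{ odd}}c_k\zeta_k^{l,r}$. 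Vandermonde's identity $\sum_k\binom{l+r}{l+k}^2=\binom{2l+2r}{l+r}$ gives $\sum_{k=-l}^r c_k=\binom{4t+6}{2t+3}^{-1}\le1/20$ for every $t\ge0$, so $2S_1\le1/10$; combined with $b_0=H(0)>0$ (guaranteed by $L>R$), this makes $b_0(1-2S_1)>0$.

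Since both $\oname{Re}H(\pi)$ and $-b_0(1-2S_1)$ are negative, it suffices to show that $|\oname{Re}H(\pi)|>4|S_2|$, for then $\oname{Re}\tilde{H}(\pi)\le\oname{Re}H(\pi)+4|S_2|<0$. The bound $|\zeta_k^{l,r}|\le H_{l+r}=H_{2t+3}$ yields $|S_2|\le H_{2t+3}\binom{4t+6}{2t+3}^{-1}$, while Item 2 of the proof of~\cref{thm:bar_pi} gives the explicit value $|\oname{Re}H(\pi)|=\tfrac{8}{t+1}\bigl(2+\tfrac{3}{t+1}+\tfrac{1}{t+2}+\tfrac{1}{t+3}\bigr)\binom{2t+6}{t+3}^{-1}$. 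The required inequality therefore reduces to
$$\frac{2\bigl(2+\tfrac{3}{t+1}+\tfrac{1}{t+2}+\tfrac{1}{t+3}\bigr)}{(t+1)\,H_{2t+3}}\cdot\frac{\binom{4t+6}{2t+3}}{\binom{2t+6}{t+3}}>1.$$

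The main obstacle is confirming this inequality uniformly in $t$ rather than only asymptotically. The central ratio $\binom{4t+6}{2t+3}/\binom{2t+6}{t+3}$ grows like $4^t/\sqrt{2}$ by Stirling's formula, which easily overwhelms the logarithmic-linear factor $(t+1)H_{2t+3}$; direct evaluation at $t=0$ already gives the left-hand side equal to $70/11>1$, and the monotonic dominance of the exponential over the polynomial factor extends the bound to all $t\ge 1$ after a short base-case check at $t=1,2$. Once this inequality is in hand, the chain $\oname{Re}\tilde{H}(\pi)\le\oname{Re}H(\pi)+4|S_2|<0$ closes the proof, so the semi-discretized Hermite WENO scheme with stencil $(t+3,t)$ is unstable for every $t\ge0$.
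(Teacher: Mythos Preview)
Your three-term decomposition $\oname{Re}\tilde{H}(\pi)=\oname{Re}H(\pi)-b_0(1-2S_1)-4S_2$ is correct, but the numerical control you claim for $S_1$ and $S_2$ rests on a wrong value of $\sum_k c_k$. From the very definition $c_k=(1/L_k^2(kh))\big/\sum_\nu 1/L_\nu^2(\nu h)$ one has $\sum_{k=-l}^r c_k=1$ identically, not $\binom{4t+6}{2t+3}^{-1}$. (The closed form displayed in the paper carries a typographical exponent; the correct normalization is $c_k=\binom{2l+2r}{l+r}^{-1}\binom{l+r}{l+k}^2$, which Vandermonde's identity confirms sums to $1$.) With $\sum_k c_k=1$ your bound $2S_1\le 1/10$ collapses, and the crude estimate $\abs{S_2}\le H_{2t+3}\sum_{k\text{ odd}}c_k$ becomes only $\abs{S_2}\le H_{2t+3}$, which is logarithmically large rather than exponentially small. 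Since $\abs{\oname{Re}H(\pi)}$ is of order $\binom{2t+6}{t+3}^{-1}$, the comparison $\abs{\oname{Re}H(\pi)}>4\abs{S_2}$ you need is hopeless for large $t$, and the argument does not close.

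The paper avoids any such comparison by computing the two auxiliary sums \emph{exactly}. The substitution $k\mapsto -k-3$ shows $\sum_k(-1)^k c_k=0$, so in fact $1-2S_1=0$ and the $b_0$ term disappears outright. For the $\zeta$-sum one invokes the harmonic-number identity
\[
\sum_{k=0}^{2n+1}(-1)^kH_k\binom{2n+1}{k}^2=\frac{(-1)^{n+1}2^{4n}(n!)^2}{(2n+1)!}
\]
(established by creative telescoping) to obtain $-4S_2<0$ directly. Combined with $\oname{Re}H(\pi)<0$ from Item~2, all three pieces are non-positive, two strictly, and the conclusion follows. Your bounding strategy cannot be rescued without at least knowing the \emph{sign} of $S_2$, and securing that sign is precisely where the nontrivial identity enters.
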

\begin{proof}
  First we consider general $(l,r)$ and simplify the expression for $\tilde{H}$.
  It is clear that $\sum_{k=-l}^rc_k = 1$, using in addition $\sum_{k=-l}^rc_k\zeta_k^{l,r}=0$ (proved later), there is: 
  \begin{align}
    \notag
    \tilde{H}(\theta) &= 2\sum_{k=-l}^rc_k\zeta_k^{l,r}e^{ik\theta} - 2\sum_{k=-l}^rc_k\zeta_k^{l,r} + \sum_{k=-l}^r\beta_ke^{ik\theta}-b_0\sum_{k=-l}^rc_ke^{ik\theta} + \sum_{k=-l}^r\beta_k - b_0 \\
    \label{eq:hermite_weno_trace}
    &= 2\sum_{k=-l}^rc_k\zeta_k^{l,r}e^{ik\theta}+ H(\theta) -b_0\sum_{k=-l}^rc_ke^{ik\theta}\;.
  \end{align}
  To see $\sum_{k=-l}^rc_k\zeta_k^{l,r}=0$, we apply the Hermite interpolation property to the constant function $1$ and obtain:
  \begin{displaymath}
    1 = \sum_{k=-l}^r1\cdot h_k(x)+\sum_{k=-l}^r0\cdot g_k(x) = \sum_{k=-l}^rh_k(x)\;,
  \end{displaymath}
  then $\sum_{k=-l}^rc_k\zeta_k^{l,r}=0$ comes from the fact that the $x^{2l+2r+1}$-coefficient of $\sum_{k=-l}^rh_k'(x)$ is zero.

  Then let us focus on the case $(l,r)=(t+3,t)$ and consider the three parts at $\theta=\pi$ in~\cref{eq:hermite_weno_trace} separately.
  The most difficult term to handle is actually the first one, which rewrites as:
  \begin{align*}
    \oname{Re}\sum_{k=-t-3}^tc_k\zeta_k^{t+3,t}e^{ik\pi} 
    &= \binom{4t+6}{2t+3}^{-2}\sum_{k=-t-3}^t(-1)^k(H_{t+3+k}-H_{t-k})\binom{2t+3}{t-k}^2 \\
    &= (-1)^{t+3}4\binom{4t+6}{2t+3}^{-2}\sum_{k=0}^{2t+3}(-1)^kH_k\binom{2t+3}{k}^2\;.
  \end{align*}
  Using the summation package {\it Sigma}~\cite{CSchneider:2007a,CSchneider:2021a}, one can show the identity:
  \begin{equation}\label{eq:hermite_h_identity}
    \sum_{k=0}^{2n+1}(-1)^kH_k\binom{2n+1}{k}^2 = \frac{(-1)^{n+1}2^{4n}n!n!}{(2n+1)!}
  \end{equation}
  for all non-negative integers $n$; more details are given in~\cref{app:harm}.
  Letting $n=t+1$ and substituting~\cref{eq:hermite_h_identity} into the previous equation, we obtain:
  \begin{displaymath}
    \oname{Re}\sum_{k=-t-3}^tc_k\zeta_k^{t+3,t}e^{ik\pi}  = (-1)^{2t+5}\binom{4t+6}{2t+3}^{-2}\cdot\frac{2^{4n+2}(t+1)!(t+1)!}{(2t+3)!} < 0\;.
  \end{displaymath}
  Next, by Item 2 in~\cref{sec:bar_pi}, we have:
  \begin{displaymath}
    \oname{Re}H(\pi) = -\frac{8}{t+1}\left(2+\frac{3}{t+1}+\frac{1}{t+2}+\frac{1}{t+3}\right)\binom{2t+6}{t+3}^{-1} < 0\;.
  \end{displaymath}
  Finally for the third term we have:
  \begin{align*}
    \oname{Re}\sum_{k=-t-3}^tc_k e^{ik\pi}
    &= \binom{4t+6}{2t+3}^{-2}\sum_{k=-t-3}^t(-1)^k\binom{2t+3}{t+3+k}^2 \\
    &= \binom{4t+6}{2t+3}^{-2}\sum_{k'=-t-3}^t(-1)^{-k'-3}\binom{2t+3}{t-k'}^2 = -\oname{Re}\sum_{k=-t-3}^tc_ke^{ik\pi}
  \end{align*}
  where we've used the change of variable $k'=-k-3$;
  it follows that $\oname{Re}\sum_{k=-t-3}^tc_ke^{ik\pi} = 0$.
  Summing them up, we see that $\oname{Re}\tilde{H}(\pi) < 0$ and it follows that the method is unstable.
\end{proof}

\section{Conclusions}
\label{sec:concl}
In this paper we established results on the stability barriers of a class of high-order Hermite-type discretizations of linear advection equations that are underlying the recently developed active flux methods and hybrid-variable methods.
In particular, we proved that the stencil of the discretization cannot be biased too much towards the upwind direction for stability consideration.
Despite the similarity of the result as its counterpart for finite-difference schemes, existing analytic tools for the latter do not extend to the current case; to this end we developed combinatoric tools to study the stability of these Hermite-type methods.
Then the analysis is extended to other Hermite-type methods such as those approximating nodal values and derivatives, which has been used in the construction of Hermite WENO schemes.
Future work includes proving the sharp barrier for stability of these Hermite-type methods, which is conjectured in the current work, and studying Hermite discretizations that involve more than two moments of the solutions.

\section*{Acknowledgements}
The author is supported by the U.S. National Science Foundation (NSF) under Grant DMS-2302080.

\bibliographystyle{plain}      
\bibliography{pap}

\appendix
\crefalias{section}{appendix}

\section{Existing strategies for studying stability barrier of FDMs}
\label{app:rev}
In this appendix we review two strategies that have been successfully used in establishing the stability barrier of explicit finite difference methods for advection equations, namely order stars and error functions.
We shall also discuss why they do not easily apply to the analysis of HV discretizations.

\subsection{Order stars}
\label{app:rev_os}
A first proof of the stability barrier of finite difference methods (FDM) was done using the theory of order stars~\cite{AIserles:1982a,AIserles:1983a,AIserles:1991a}.
Let the Cauchy problem~\cref{eq:prelim_adv} be discretized by a finite difference method (FDM) in space:
\begin{equation}\label{eq:app_rev_os_fdm_semi}
  u_j' + \mathcal{D}_xu_j = 0\;,\quad \mathcal{D}_xu_j \eqdef \frac{1}{h}\sum_{k=-l}^ra_ku_{j+k}\;, 
\end{equation}
then the conventional stability analysis requires:
\begin{equation}\label{eq:app_rev_os_fdm_stab}
  \mathcal{S}_{\fdm}'\eqdef\left\{\sum_{k=-l}^ra_ke^{ik\theta}:\,0<\theta<2\pi\right\}\subseteq\mathbb{C}^+\;.
\end{equation}
The theory of order stars takes a different approach and consider the set:
\begin{equation}\label{eq:app_rev_os_fdm}
  \mathcal{O}_{\fdm}\eqdef\left\{z\in\mathbb{C}:\,-\pi\le\oname{Im}z\le\pi,\ \oname{Re}(\lambda-z)<0\,,\ \lambda=\sum_{k=-l}^ra_ke^{kz}\right\}\;.
\end{equation}
On the one hand, it is easy to see that~\cref{eq:app_rev_os_fdm_stab} indicates $\mathcal{O}_{\fdm}\cap\{i\mathbb{R}\}=\varnothing$. 
On the other hand, supposing~\cref{eq:app_rev_os_fdm_semi} is $p$-th order accurate then $\lambda-z=cz^{p+1}+O(z^{p+2})$ for some nonzero constant $c$.
Intuitively, it means that the origin is adjoined by $p+1$ sectors of equal angle $\pi/(p+1)$ of $\mathcal{O}_{\fdm}$, whereas for stability, none of these sectors extend to a region that cross the imaginary axis, see~\cref{fg:app_rev_os_fdm} for two examples.
\begin{figure}\centering
  \begin{subfigure}[b]{.49\textwidth}\centering
    \includegraphics[trim=1.8in .6in 1.8in .6in, clip, width=\textwidth]{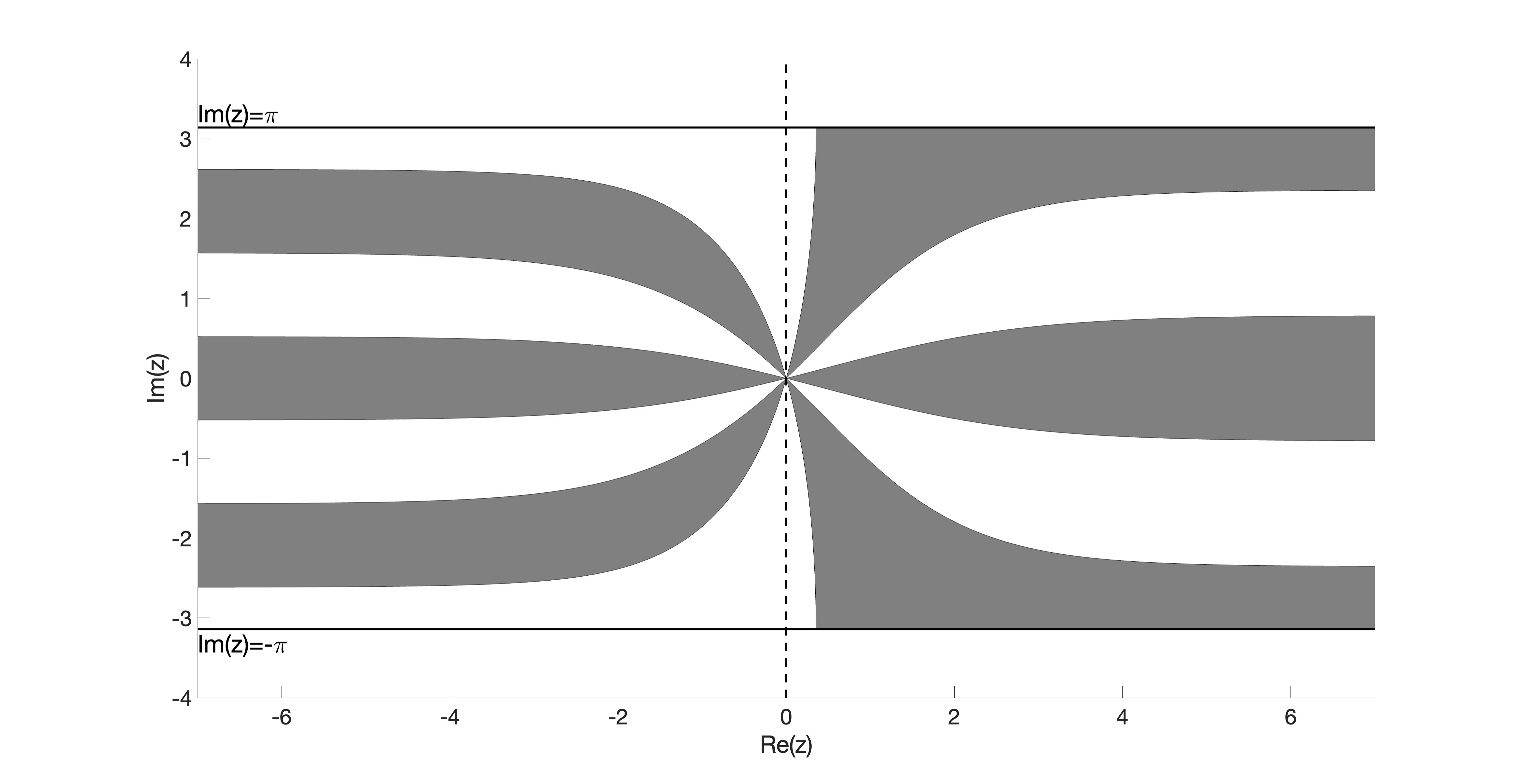}
    \caption{$\mathcal{D}_xu_j = \frac{-2u_{j-3}+15u_{j-2}-60u_{j-1}+20u_j+30u_{j+1}-3u_{j+2}}{60h}$}
    \label{fg:app_rev_os_fdm_stab}
  \end{subfigure}
  \begin{subfigure}[b]{.49\textwidth}\centering
    \includegraphics[trim=1.8in .6in 1.8in .6in, clip, width=\textwidth]{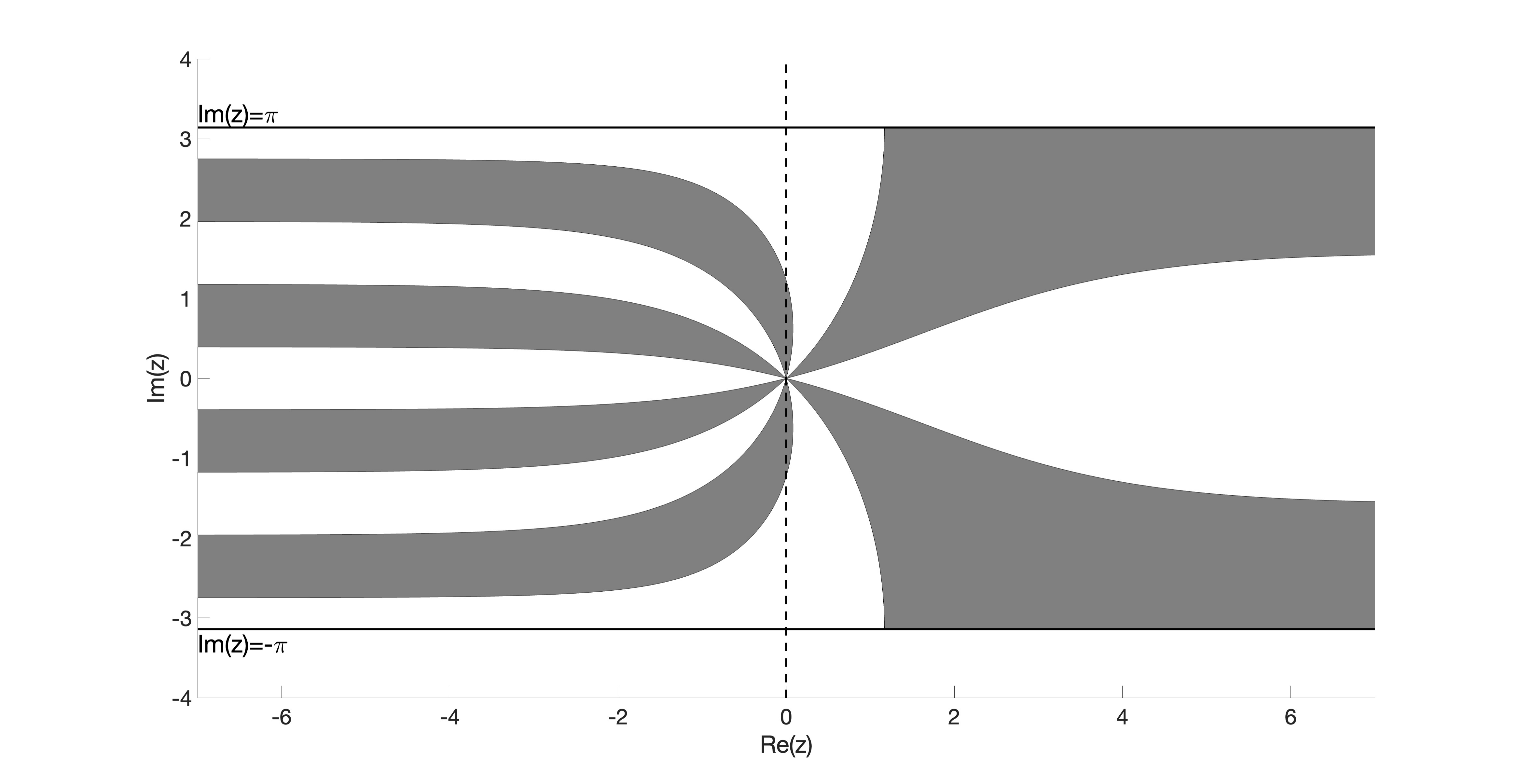}
    \caption{$\mathcal{D}_xu_j = \frac{3u_{j-4}-20u_{j-3}+60u_{j-2}-120u_{j-1}+65u_j+12u_{j+1}}{60h}$}
    \label{fg:app_rev_os_fdm_unstab}
  \end{subfigure}
  \caption{Order stars (shaded region) of fifth-order FDMs: the left one is stable with $l=3, r=2$, and the right one is unstable with $l=4, r=1$.}
  \label{fg:app_rev_os_fdm}
\end{figure}
Essentially, if a method is stable then the imaginary axis separates the ``shaded regions'' into two groups, the ones to the left of $i\mathbb{R}$ extends to $-\infty$ and number is controlled by $l$ and the ones to the right of $i\mathbb{R}$ extends to $+\infty$ and the number is controlled by $r$.
However, it is also known that each shaded sector adjoining the origin point forms an angle $\pi/(p+1)$, which tells that for a stable method $l$ could not be too different from $r$.

The beauty of the order star theory is that the proof is pure geometrical and dodges the tedious and non-trivial analysis of the positivity of the trigonometric polynomial $\oname{Re}\,\lambda = \sum_{k=-l}^ra_k\cos(k\theta)$.
However, such elegance does not carry to HV discretizations. 

First of all, one notice that $\lambda$ as defined in~\cref{eq:prelim_semi_traj} is an algebraic function of $e^{i\theta}$; thus the order star must be studied on the Riemann surface:
\begin{equation}\label{eq:app_rev_os_hv_riem}
  \mathcal{M} = \{\tilde{z}=(\lambda,z):\,-\pi\le\oname{Im}z\le\pi,\ \lambda^2-H(z/i)\lambda-F(z/i) = 0\}\;,
\end{equation}
and we define the projections $\pi(\tilde{z})=\lambda$ and $\rho(\tilde{z})=z$.
Then each $\mathcal{M}$ can be thought of as two sheets $\{z:\,-\pi\le\oname{Im}z\le\pi\}$ glued together along some cuts; and the order star is defined on this Riemann surface as:
\begin{equation}\label{eq:app_rev_os_hv}
  \mathcal{O}\eqdef\left\{\tilde{z}\in\mathcal{M}:\,\ \oname{Re}\sigma(\tilde{z})<0\,,\ \sigma(\tilde{z})=\pi(\tilde{z})-\rho(\tilde{z})\right\}\;.
\end{equation}
The Riemann surfaces and order stars corresponding to the methods~\cref{eq:prelim_ddo_43} and~\cref{eq:prelim_ddo_52} are depicted in~\cref{fg:app_rev_os_hv}.
\begin{figure}\centering
  \begin{subfigure}[b]{\textwidth}\centering
    \begin{tikzpicture}
      \draw (0,0) node [inner sep=0] {\includegraphics[trim=1.2in .1in 1.8in 1.1in, clip, width=.49\textwidth]{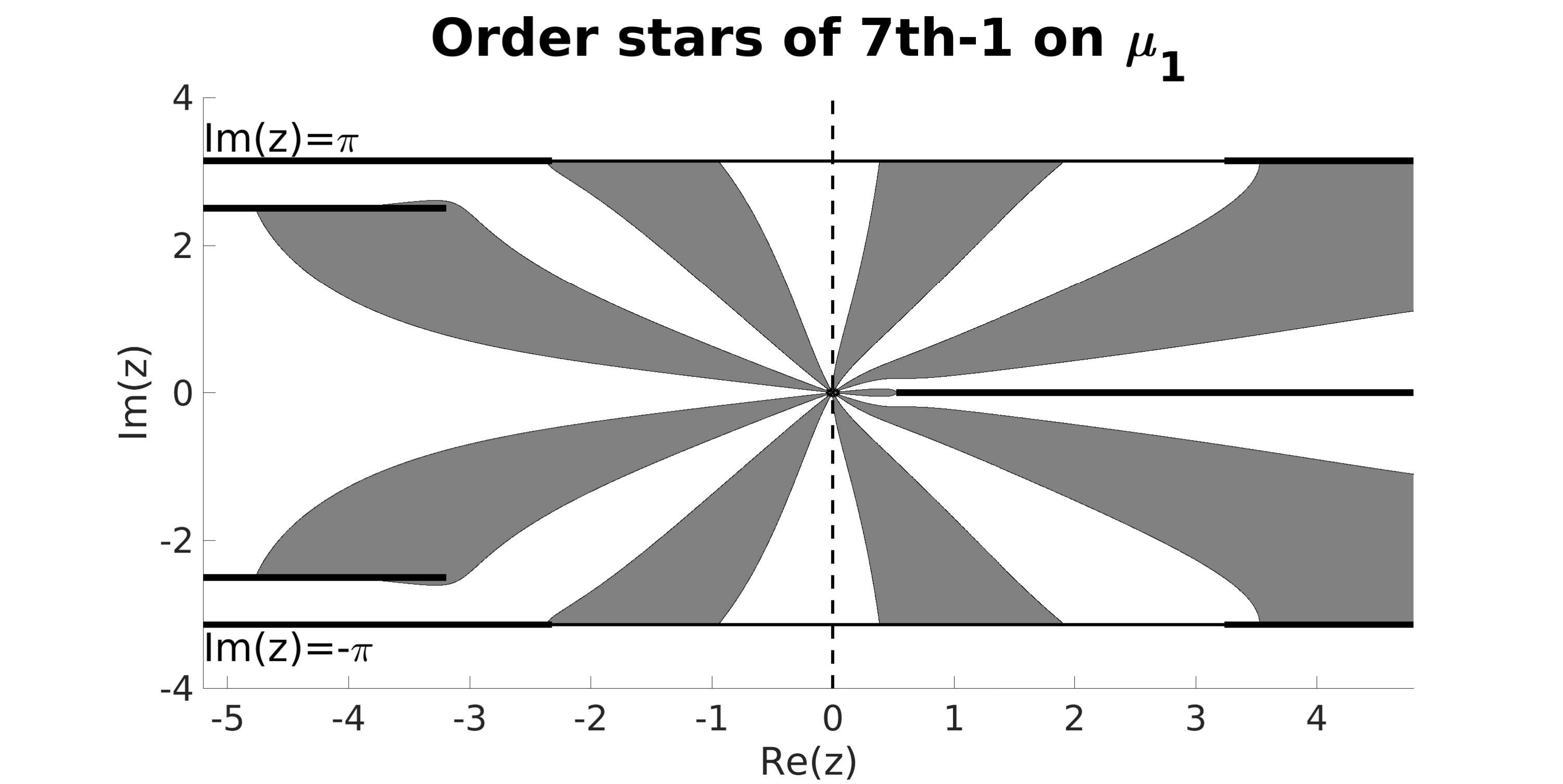}~\includegraphics[trim=1.2in .1in 1.8in 1.1in, clip, width=.49\textwidth]{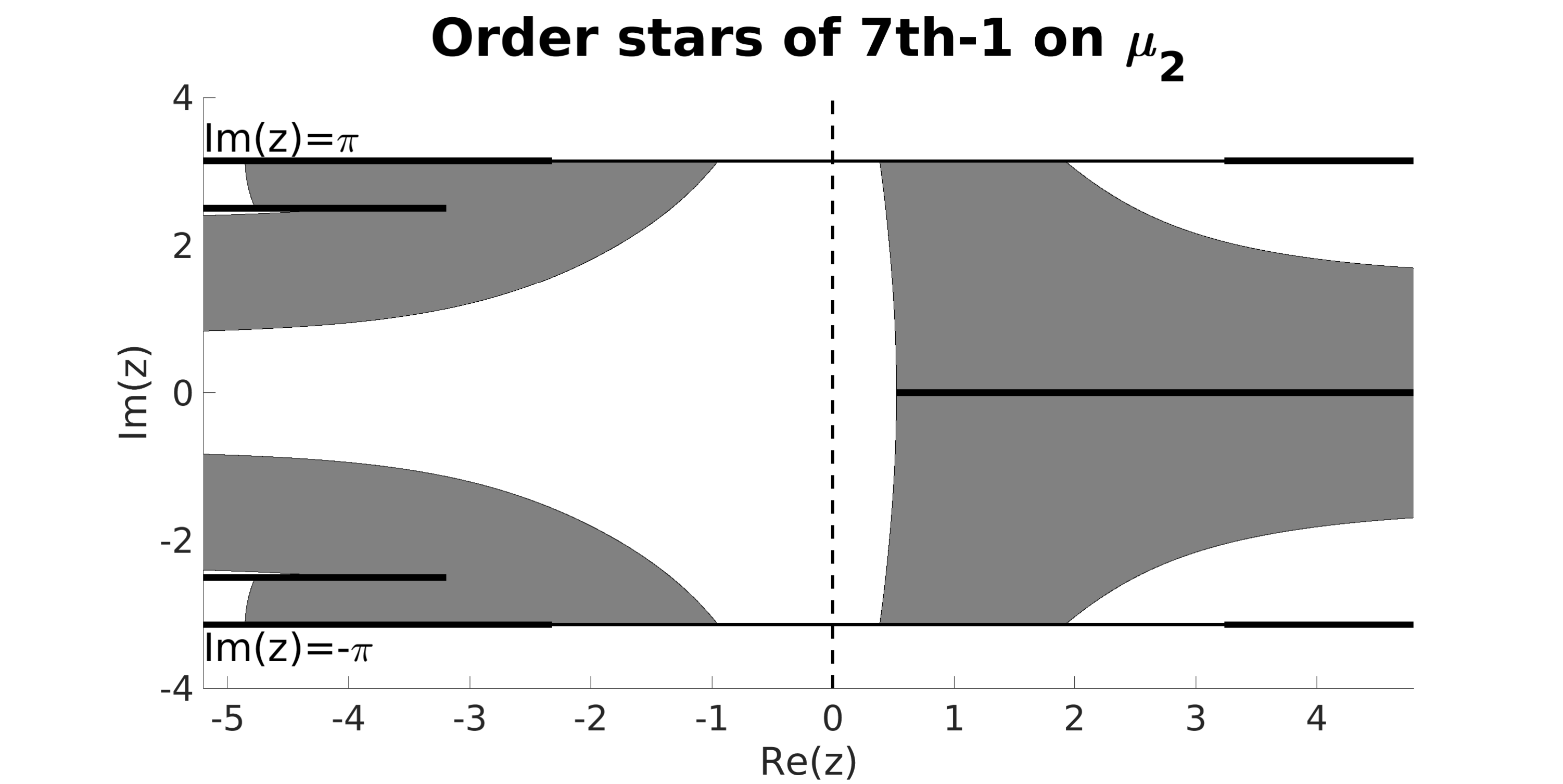}};
      \draw (-2.2,1.1) node {\tiny $A_1$};
      \draw (4.2,1.1) node {\tiny $A_2$};
      \draw (-0.4, 1.1) node {\tiny $C_1$};
      \draw (-0.4,-0.8) node {\tiny $C_2$};
      \draw (-5, 0.9) node {\tiny $D_1$};
      \draw (1.2, 1.12) node {\tiny $D_2$};
    \end{tikzpicture}
    \caption{}
    \label{fg:app_rev_os_hv_43}
  \end{subfigure}
  \begin{subfigure}[b]{\textwidth}\centering
    \begin{tikzpicture}
      \draw (0,0) node [inner sep=0] {\includegraphics[trim=1.2in .1in 1.8in 1.1in, clip, width=.49\textwidth]{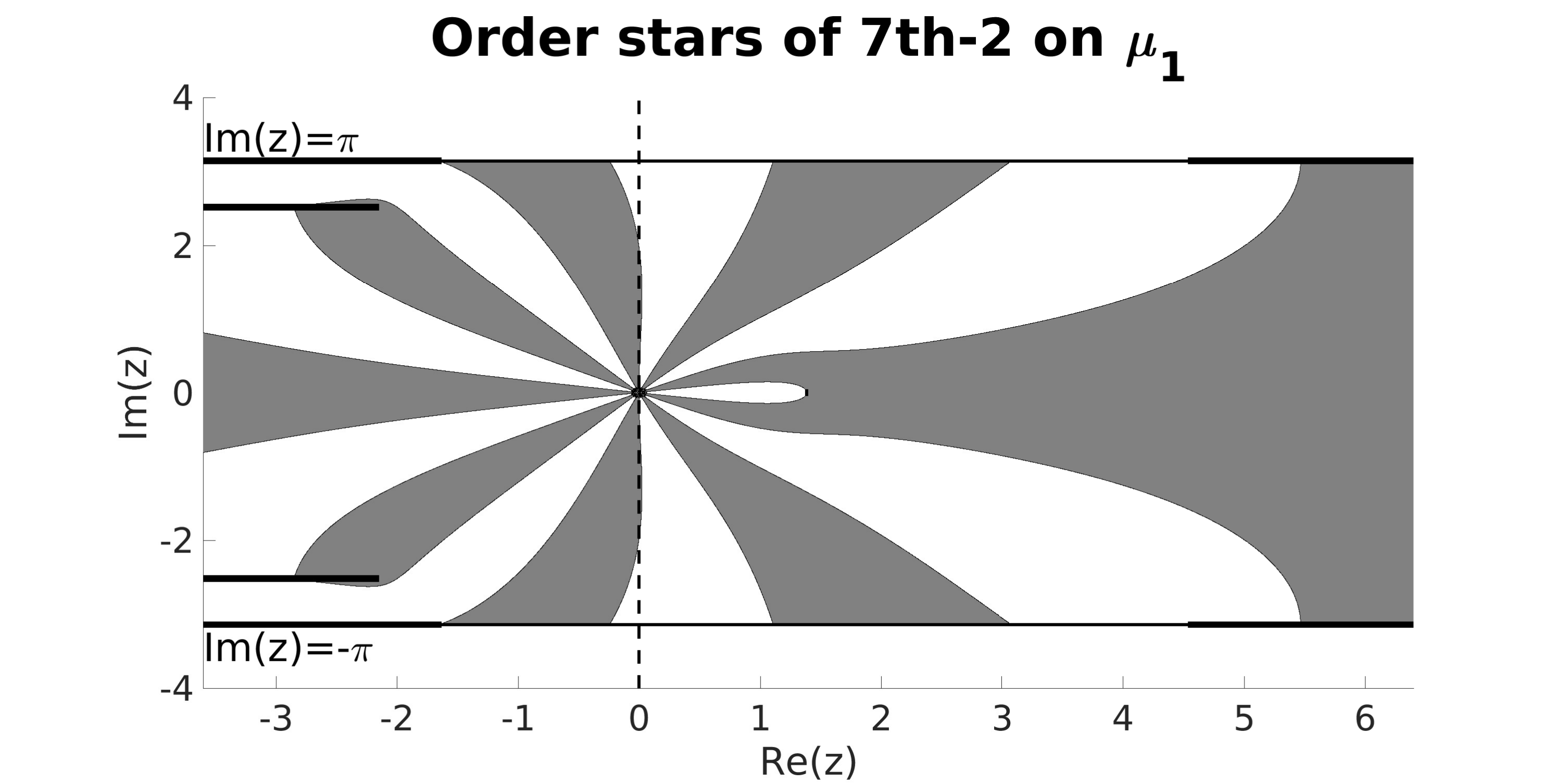}~\includegraphics[trim=1.2in .1in 1.8in 1.1in, clip, width=.49\textwidth]{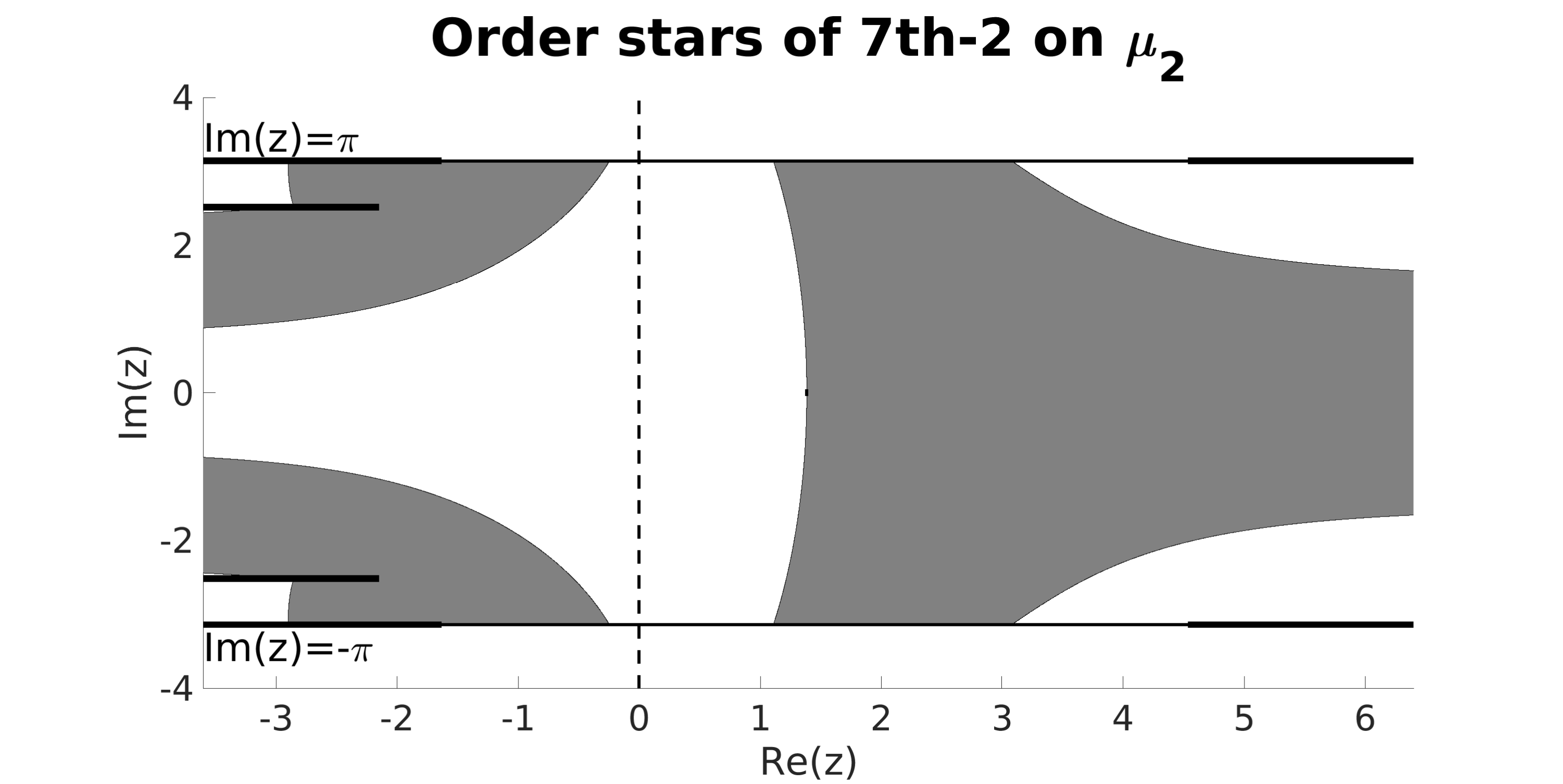}};
      \draw (-2.5,-0.8) node {\tiny $B_1$};
      \draw (3.8,-0.8) node {\tiny $B_2$};
      \draw (-5.05,-0.6) node {\tiny $E_1$};
      \draw (1.15,-0.82) node {\tiny $E_2$};
    \end{tikzpicture}
    \caption{}
    \label{fg:app_rev_os_hv_52}
  \end{subfigure}
  \caption{Order stars of two HV methods: (upper) a stable one with stencil $(4,3)$ and (lower) an unstable one with stencil $(5,2)$.
  The principal sheets are in the left panels and the second sheets in the right ones.}
  \label{fg:app_rev_os_hv}
\end{figure}
In these figures, a thick line represents a branch cut (and the end points of these lines are branch points of the Riemann surface).
To understand the geometry, let us consider a continuous trajectory $\gamma$ not passing through any branch point on $\mathcal{M}$.
Generally, a trajectory crossing $\oname{Im}z=\pm\pi$ at $x\pm i\pi$ on one sheet will enter the other sheet from the same location on this line (e.g., $A_1\leftrightarrow A_2$ and $B_1\leftrightarrow B_2$), unless the crossing point is on a branch cut in which case $\gamma$ will enter the same sheet at $x\mp i\pi$ (like $C_1\leftrightarrow C_2$).
If $\gamma$ cross a branch cut in $-\pi<\oname{Im}z<\pi$, it will enter the other sheet on the opposite side of the same branch cut (for example, $D_1\leftrightarrow D_2$ and $E_1\leftrightarrow E_2$).
It should be noted that in~\cref{fg:app_rev_os_hv_52}, there are two cuts near the positive real axis and they are very close.

The difficulty using order stars to study HV methods lies in the branch points.
Particularly, the boundary of $\mathcal{O}_{\fdm}$ only consists of infinite curves whereas that of $\mathcal{O}$ contains closed ones that enclose branch points.
While the number of infinite curves are controled by the stencil index, that of closed curves to the left and right of imaginary axis depends on the number of branch points in these regions.
However, a point $z$ is a branch point if it solves $H^2(z/i)+4F(z/i)=0$, and this is a problem no simpler than studying the conditions~\cref{eq:prelim_stab_cond} directly. 


\subsection{Integral form of the error function}
\label{app:rev_err}
A second proof of the stability barrier of FDMs by Despr\'{e}s~\cite{BDespres:2009a} studies the approximation error of $\mathcal{D}_xu_j$.
Let $\mathcal{D}_x$ in~\cref{eq:app_rev_os_fdm_semi} have the optimal order of accuracy $p=l+r$, one has:
\begin{displaymath}
  i\theta = \sum_{k=-l}^ra_ke^{ik\theta}+O(\theta^{p+1})\;,
\end{displaymath}
and by writing $e^{-i\theta}=1+z$:
\begin{displaymath}
  (1+z)^r\ln(1+z) = P(z) + O(z^{p+1})\,,\quad P(z) \eqdef  -\sum_{k=-l}^ra_k(1+z)^{r-k}.
\end{displaymath}
As $P(z)$ has degree $p$, it is the $p$-th Taylor polynomial of $S(z) = (1+z)^r\ln(1+z)$ and by the Taylor's theorem:
\begin{equation}\label{eq:app_rev_err_fdm}
  S(z) = P(z) + \int_0^1\frac{(1-t)^pS^{(p+1)}(tz)z^{p+1}}{p!}dt = P(z) + \int_0^1\frac{(-1)^l(1-t)^pr!l!z^{p+1}}{p!(1+tz)^{l+1}}dt\;,
\end{equation}
where $S^{(p+1)}(z)$ is the $(p+1)$-th derivative of $S(z)$.
Using the method of contradiction, Despr\'{e}s shows by elementary calculus that the real part of the integral term in~\cref{eq:app_rev_err_fdm} cannot be positive for all $\theta$ if $l\ge r+3$, which is required by the stability.

In contrast, the same strategy does not apply to hybrid-variable discretizations, mainly due to a lacking of a general theory of algebraic approximation to non-polynomials (Taylor series produce polynomial approximations to such functions).
Specifically, one can show that an optimally accurate method defined by~\cref{eq:prelim_ddo} gives:
\begin{displaymath}
  i\theta = \frac{1}{i\theta}F(\theta)+H(\theta)+O(\theta^{p+1})\;,
\end{displaymath}
where $G$ and $F$ are given in~\cref{eq:prelim_semi_mat} and below~\cref{eq:prelim_semi_eigset}, respectively.
With the same change of variable, one gets for $S(z)=(1+z)^r\ln(1+z)$:
\begin{equation}\label{eq:app_rev_err_hv}
  \left[S(z)\right]^2 = S(z)\sum_{k=-l'}^{r'}\beta_k(1+z)^{r-k}+\sum_{k=-l}^r\Delta\alpha_k(1+z)^{2r-k} + O(z^{p+2})\;,
\end{equation}
where $\Delta\alpha_k = \alpha_{k-1}-\alpha_k$, $-l\le k\le r$, and $\alpha_{-l-1}=\alpha_r=0$.
Thus the analysis along this line is the lacking of an exact formula for the $O(z^{p+2})$ error term in~\cref{eq:app_rev_err_hv}.

\section{The proof of remaining three cases in~\cref{thm:bar_sqrt}}
\label{app:sqrt}
In this section we provide the key steps in the remaining three cases of the proof to~\cref{thm:bar_sqrt}.

\medskip

\noindent
{\bf Case 2: $L=2t+2m+1$, $R=2t$, where $t\ge0$ and $m\ge0$}.

\smallskip

In this case the four-index stencil is $(l,r,l',r')=(t+m+1,t,t+m,t)$, then following the same strategy for Case 1 in~\cref{sec:bar_sqrt}, stability of the method indicates for all $1\le k\le t+m$:
\begin{displaymath}
  \sum_{j=1}^k\left[\ln\left(1+\frac{m-k}{t+j}\right)+\ln\left(1+\frac{m-k+1}{t+j}\right)\right]<\ln\left(1+\sum_{j=1}^m\frac{4k}{t+j}+\frac{2k}{t+m+1}\right)\,;
\end{displaymath}
and setting $k=1$ one gets $m\le 2t+3$.

Similar to~\cref{eq:bar_sqrt_case1_halfm_simp}, picking $k=\floor{m/2}$ one obtains the inequality:
\begin{displaymath}
  \left(1+\frac{k}{t+1}\right)^k\left(1+\frac{k}{t+k}\right)^k < 1 + \frac{4k^2+3k}{t+1} + \frac{4k^2+3k}{t+k}\;.
\end{displaymath}
Then using $k\le t+1$, there is:
\begin{align*}
  &\ 2^{\frac{k^2}{t+1}+\frac{k^2}{t+k}} < 1 + \frac{4k^2+3k}{t+1} + \frac{4k^2+3k}{t+k} \le 1 + \frac{7k^2}{t+1} + \frac{7k^2}{t+k}\ 
  \Rightarrow\ \frac{2k^2}{t+k} < \frac{k^2}{t+1}+\frac{k^2}{t+k} < \frac{21}{4} \\ 
  \Rightarrow&\ k<\frac{21}{16}+\sqrt{\frac{21}{8}t+\frac{441}{256}}
  \ \Rightarrow\ L-R\le4k+3<\frac{33}{4}+\sqrt{21R+\frac{441}{16}}<9+\sqrt{21R+49}\;.
\end{align*}
Combining with $m\le 2t+3$, one has: $L-R\le\min(2R+7,9+\sqrt{21R+49})$.

\medskip

\noindent
{\bf Case 3: $L=2t+2m+1$, $R=2t+1$, where $t\ge0$ and $m\ge1$}.

\smallskip

In this case the four-index stencil is $(l,r,l',r')=(t+m+1,t+1,t+m,t)$, and the stability of the method indicates for all $1\le k\le t+m$:
\begin{displaymath}
  \sum_{j=1}^k\left[\ln\left(1+\frac{m-k}{t+1+j}\right)+\ln\left(1+\frac{m-k}{t+j}\right)\right]<\ln\left(1+\frac{2k}{t+1}+\sum_{j=2}^m\frac{4k}{t+j}+\frac{2k}{t+m+1}\right)\,;
\end{displaymath}
and setting $k=1$ one gets $m\le 2t+3$.

Then picking $k=\floor{m/2}$ gives rise to the inequality:
\begin{displaymath}
  \left(1+\frac{k}{t+1}\right)^k\left(1+\frac{k}{t+k+1}\right)^k < 1 + \frac{4k^2+2k}{t+1} + \frac{4k^2+2k}{t+k+1}\;.
\end{displaymath}
Then using $k\le t+1$, there is:
\begin{align*}
  &\ 2^{\frac{k^2}{t+1}+\frac{k^2}{t+k+1}} < 1 + \frac{4k^2+2k}{t+1} + \frac{4k^2+2k}{t+k+1} \le 1 + \frac{6k^2}{t+1} + \frac{6k^2}{t+k+1}\ 
  \Rightarrow\ \frac{2k^2}{t+k+1} < \frac{k^2}{t+1}+\frac{k^2}{t+k+1} < 5 \\ 
  \Rightarrow&\ k<\frac{5}{4}+\sqrt{\frac{5}{2}t+\frac{65}{16}}
  \ \Rightarrow\ L-R\le4k+2<7+\sqrt{20R+45}<9+\sqrt{21R+49}\;.
\end{align*}
Combining with $m\le 2t+3$, one has: $L-R\le\min(2R+6,9+\sqrt{21R+49})$.

\medskip

\noindent
{\bf Case 4: $L=2t+2m+2$, $R=2t+1$, where $t\ge0$ and $m\ge0$}.

\smallskip

In this case the four-index stencil is $(l,r,l',r')=(t+m+1,t+1,t+m+1,t)$, and the stability of the method indicates for all $1\le k\le t+m$:
\begin{displaymath}
  \sum_{j=1}^k\left[\ln\left(1+\frac{m-k}{t+1+j}\right)+\ln\left(1+\frac{m-k+1}{t+j}\right)\right]<\ln\left(1+\frac{2k}{t+1}+\sum_{j=2}^{m+1}\frac{4k}{t+j}\right)\,;
\end{displaymath}
and setting $k=1$ one gets $m\le 2t+3$.

Letting $k=\floor{m/2}$ again there is:
\begin{displaymath}
  \left(1+\frac{k}{t+1}\right)^k\left(1+\frac{k}{t+k+1}\right)^k < 1 + \frac{4k^2+3k}{t+1} + \frac{4k^2+3k}{t+k+1}\;.
\end{displaymath}
Then using $k\le t+1$ we obtain:
\begin{align*}
  &\ 2^{\frac{k^2}{t+1}+\frac{k^2}{t+k+1}} < 1 + \frac{4k^2+3k}{t+1} + \frac{4k^2+3k}{t+k+1} \le 1 + \frac{7k^2}{t+1} + \frac{7k^2}{t+k+1}\ 
  \Rightarrow\ \frac{2k^2}{t+k+1} < \frac{k^2}{t+1}+\frac{k^2}{t+k+1} < \frac{21}{4} \\ 
  \Rightarrow&\ k<\frac{21}{16}+\sqrt{\frac{21}{8}t+\frac{1113}{256}}
  \ \Rightarrow\ L-R\le4k+3<\frac{33}{4}+\sqrt{21R+\frac{777}{16}}<9+\sqrt{21R+49}\;.
\end{align*}
Combining with $m\le 2t+3$, one has: $L-R\le\min(2R+7,9+\sqrt{21R+49})$.

\section{Proofs of combinatoric results}
\label{app:comb}
In this appendix we prove the combinatoric results in~\cref{sec:bar_reh}.
Recall that given a polynomial $P(x)$, we denote the coefficient of $x^n$ by $[P]_n$ or $[P(x)]_n$, whichever suits the context better.
Most of the results in~\cref{sec:bar_reh} can be derived by studying the following more general functions with three indices:
\begin{align*}
  &C_{m_1,m_2;n}(\theta) = \sum_k\binom{m_1}{n+k}\binom{m_2}{n-k}\cos(k\theta)\;, 
  &&S_{m_1,m_2;n}(\theta) = \sum_k\binom{m_1}{n+k}\binom{m_2}{n-k}\sin(k\theta)\;, \\
  &c_{m_1,m_2;n}(\theta) = \sum_k\binom{m_1}{n+k}\binom{m_2}{n+1-k}\cos(k\theta)\;, 
  &&s_{m_1,m_2;n}(\theta) = \sum_k\binom{m_1}{n+k}\binom{m_2}{n+1-k}\sin(k\theta)\;.
\end{align*}
We adopt the convention that when $(\theta)$ is omitted, the symbol such as $C_{m_1,m_2;n}$ means the function $C_{m_1,m_2;n}(\theta)$.
Then the function $C_{m,n}$ defined in~\cref{eq:bar_reh_cfun} equals $C_{m,m;n}$.
Taking derivatives of these functions, it is elementary to compute that:
\begin{subequations}\label{eq:app_comb_aux_der}
  \begin{align}
    C'_{m_1,m_2;n} &= -\sum_k\binom{m_1}{n+k}\binom{m_2}{n-k}[(n+k)-n]\sin(k\theta) = -m_1s_{m_1-1,m_2;n-1}+nS_{m_1,m_2;n}\;, \\
    S'_{m_1,m_2;n} &= \sum_k\binom{m_1}{n+k}\binom{m_2}{n-k}[(n+k)-n]\cos(k\theta) = m_1c_{m_1-1,m_2;n-1}-nC_{m_1,m_2;n}\;, \\
    \notag
    s'_{m_1,m_2;n} &= \sum_k\binom{m_1}{n+k}\binom{m_2}{n+1-k}[(n+1)-(n+1-k)]\cos(k\theta) \\
                           &= (n+1)c_{m_1,m_2;n}-m_2C_{m_1,m_2-1;n}\;.
  \end{align}
\end{subequations}


\medskip 

To prove~\cref{lm:bar_reh_cfun_alt}, we define the $2m$-degree polynomial:
\begin{equation}\label{eq:app_comb_alt_aux}
  f(x) = (1+e^{i\theta}x)^m(1+x)^m\;,
\end{equation}
then its coefficient for $x^{2n}$ is given by:
\begin{align}
  \notag
  [f]_{2n} &= \sum_k[(1+e^{i\theta}x)^{m}]_{k}[(1+x)^{m}]_{2n-k} = \sum_k\binom{m}{k}\binom{m}{2n-k}e^{ik\theta} \\
  \label{eq:app_comb_alt_aux_even}
  &= \sum_k\binom{m}{n+k}\binom{m}{n-k}e^{i(n+k)\theta} 
   = e^{in\theta}\left[C_{m,m;n}(\theta)+iS_{m,m;n}(\theta)\right]\;.
\end{align}
%

Next, we compute $[f]_{2n}$ in an alternative way:
\begin{align*}
  [f]_{2n} &= \left[\left((1+x)(1+e^{i\theta}x)\right)^m\right]_{2n}
  = \left[\left(\sin^2\frac{\theta}{2}+\left(\cos\frac{\theta}{2}+e^{i\frac{\theta}{2}}x\right)^2\right)^m\right]_{2n} \\
  &= \sum_k\binom{m}{m-k}\cdot\sin^{2(m-k)}\frac{\theta}{2}\cdot\binom{2k}{2n}\cos^{2(k-n)}\frac{\theta}{2}\cdot e^{in\theta}\;.
\end{align*}
Then~\cref{eq:bar_reh_cfun_alt} is obtained by comparing it with~\cref{eq:app_comb_alt_aux_even}.

\medskip
 

Next to show~\cref{lm:bar_reh_cfun_der}, we use~\cref{eq:app_comb_aux_der} to have:
\begin{align*}
  C''_{m,n} &= C''_{m,m;n} = -ms'_{m-1,m;n-1}+nS'_{m,m;n} \\
  &= -m(nc_{m-1,m;n-1}-mC_{m-1,m-1;n-1})+n(mc_{m-1,m;n-1}-nC_{m,m;n}) \\
  &= m^2C_{m-1,m-1;n-1}-n^2C_{m,m;n} = m^2C_{m-1,n-1}-n^2C_{m,n}\;.
\end{align*}
This is precisely~\cref{eq:bar_reh_cfun_der} in~\cref{lm:bar_reh_cfun_der}.


\section{Proof of~\cref{eq:hermite_h_identity} using {\it Sigma}}
\label{app:harm}
In this section we show the identity~\cref{eq:hermite_h_identity} using the summation package {\it Sigma}~\cite{CSchneider:2007a,CSchneider:2021a}.
The results generated by {\it Sigma} are displayed in~\cref{fg:app_harm_sigma}, and we explain these verifiable steps briefly next.
\begin{figure}\centering
  \includegraphics[width=.9\textwidth]{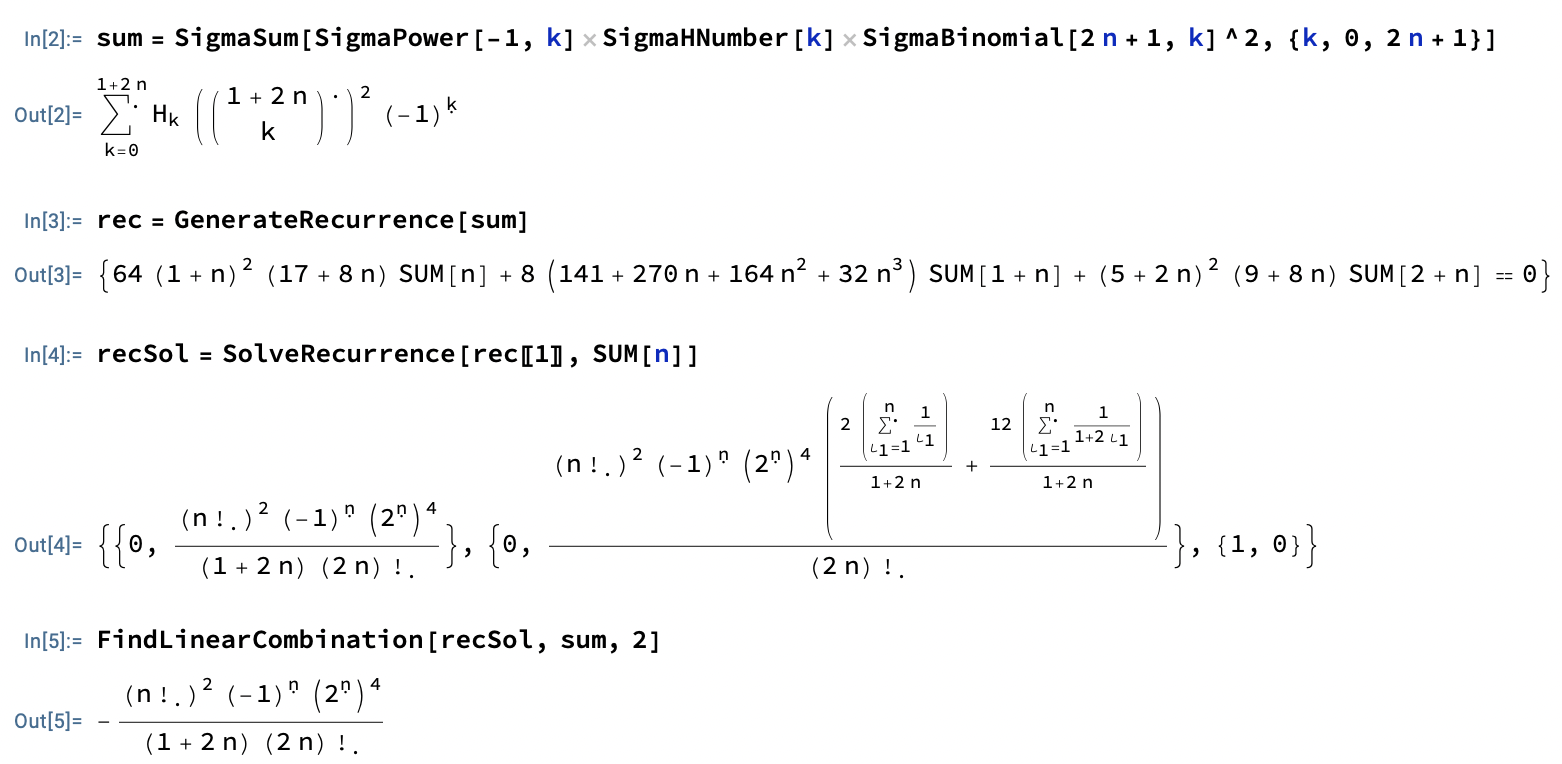}
  \caption{Results generated by {\it Sigma}.}
  \label{fg:app_harm_sigma}
\end{figure}
First, we represent the left hand side of~\cref{eq:hermite_h_identity} as:
\begin{equation}\label{eq:app_harm_ct}
  S_n \eqdef \sum_{k=0}^{2n+1}(-1)^kH_k\binom{2n+1}{k}^2 = \sum_{k=0}^{n}f_{n,k}\;,\quad
  f_{n,k}\eqdef\sum_{j=2k}^{2k+1}(-1)^jH_j\binom{2n+1}{j}^2\;.
\end{equation}
Then by the method of creative telescoping~\cite{DZeilberger:1991a}, {\it Sigma} computes the reccursive relation of the $f$-functions:
\begin{equation}\label{eq:app_harm_recur_gen}
  g_{n,k+1}-g_{n,k} = c_0(n)f_{n,k}+c_1(n)f_{n+1,k}+\cdots+c_\delta f_{n+\delta,k}
\end{equation}
for some small non-negative integer $\delta$ and function $g$; then taking the sum of~\cref{eq:app_harm_recur_gen} over all $0\le k\le n$, one gets:
\begin{equation}\label{eq:app_harm_recur}
  c_0(n)S_n +c_1(n)S_{n+1}+\cdots+c_{\delta}(n)S_{n+\delta} = q(n)\;,
\end{equation}
where $q(n)=g_{n,n+1}-g_{n,0}+c_1(n)f_{n+1,n+1}+c_2(n)[f_{n+2,n+1}+f_{n+2,n+2}]+\cdots+c_\delta(n)[f_{n+\delta,n+1}+\cdots+f_{n+\delta,n+\delta}]$.
The recursive formula~\cref{eq:app_harm_recur_gen} corresponding to our formula~\cref{eq:app_harm_ct} computed by this algorithm has $\delta=2$ and is given by:
\begin{equation}\label{eq:app_harm_ct_recur}
  64(n\!+\!1)^2(8n\!+\!17)S_n + 8(32n^3\!+\!164n^2\!+\!270n\!+\!141)S_{n+1} + (8n\!+\!9)(2n\!+\!5)^2S_{n+2} = 0\;,
\end{equation}
see also \textrm{Out[3]} in~\cref{fg:app_harm_sigma}.

Next, one solves for the general solution of the recurrence relation~\cref{eq:app_harm_recur}:
\begin{equation}\label{eq:app_harm_recur_sol}
  S_n = k_0h_0(n) + k_1h_1(n) + \cdots + k_dh_d(n) + p(n)\;,
\end{equation}
where $h_0(n),\cdots,h_d(n)$ are linearly independent solutions to the homogeneous problem\footnote{In the special case $c_0(n)$ and $c_\delta(n)$ have finite many zeros, $d=\delta$; but in general $d$ could be different from $\delta$.} and $p(n)$ is a particular solution.
For our problem, this is computed as:
\begin{equation}\label{eq:app_harm_ct_recur_sol}
  S_n = k_0\cdot\frac{(-1)^n2^{4n}(n!)^2}{(2n+1)!} + k_1\cdot\frac{(-1)^n2^{4n}(n!)^2(2H_n+12(H_{2n+1}-1-\frac{1}{2}H_n))}{(2n+1)!}\;,
\end{equation}
see \textrm{Out[4]} in~\cref{fg:app_harm_sigma}.

Lastly, the $k$-coefficients are found by fitting the general solution with the first a few values of $S_n$, which eventually gives rise to $k_0=-1$ and $k_1=0$, or~\cref{eq:hermite_h_identity} as we need.
This is achieved in {\it Sigma} using the last line as shown in~\cref{fg:app_harm_sigma}.


\end{document}